\numberwithin{equation}{section}
\theoremstyle{plain}
\newtheorem{theorem}{Theorem}[section]
\newtheorem{lemma}{Lemma}[section]
\newtheorem{corollary}{Corollary}[section]
\newtheorem{definition}{Definition}[section]
\theoremstyle{definition}
\newtheorem{remark}{Remark}[section]
\newtheorem{example}{Example}[subsection]
\begin{document}

\begin{frontmatter}
\title{Multivariate Density Estimation via Adaptive
  Partitioning (I): Sieve MLE}
\runtitle{Density Estimation Via Adaptive Partitioning}

\begin{aug}
\author{\fnms{Linxi} \snm{Liu}\thanksref{t2,m1}\ead[label=e1]{linxiliu@stanford.edu}}
\and
\author{\fnms{Wing Hung} \snm{Wong}\thanksref{t2,m1,m2}\ead[label=e2]{whwong@stanford.edu}}

\thankstext{t2}{Supported by NIH grant R01GM109836, and NSF grants DMS1330132 and DMS1407557.}
\runauthor{L. Liu and W. H. Wong}

\affiliation{Department of Statistics, Stanford University\thanksmark{m1}\\Department of Health Research and Policy, Stanford University\thanksmark{m2}}

\address{Department of Statistics\\
Stanford University\\
390 Serra Mall, Sequoia Hall\\
Stanford, California 94305\\
USA\\
\printead{e1} \\
\phantom{E-mail:\ }\printead*{e2}}

\end{aug}

\begin{abstract}
We study a non-parametric approach to
multivariate density estimation. The estimators are piecewise constant density functions supported by binary partitions. The partition of the sample space is
learned by maximizing the likelihood of the corresponding histogram
on that partition. We
analyze the convergence rate of the sieve maximum likelihood estimator, and reach a conclusion
that for a relatively rich class of density functions the rate does not directly depend on the dimension. This suggests
that, under certain conditions, this method is immune to the curse of
dimensionality, in the sense that it is possible to get close to the
parametric rate even in high dimensions. We also apply
this method to several special cases, and
calculate the explicit convergence rates respectively.
\end{abstract}

\begin{keyword}[class=MSC]
\kwd[Primary ]{62G20, }
\kwd[secondary ]{62H10.}
\end{keyword}

\begin{keyword}
\kwd{density estimation}
\kwd{convergence rate}
\kwd{adaptive partitioning}
\kwd{spatial adaptation}
\kwd{variable selection}
\kwd{Haar wavelet.}
\end{keyword}

\end{frontmatter}

\section{Introduction}
Density estimation is
a fundamental problem in statistics. Once an explicit estimate
of the density function is obtained, various kinds of
statistical inference can follow, including non-parametric testing, clustering,
and data compression. Previous research focused on both parametric and nonparametric density estimation methods. However, currently, increasing
dimension and data size impose great difficulty on these
traditional methods. For instance, a fixed parametric family, such as multivariate Gaussian, may fail to
capture the spatial features of the true density function under high
dimensions. On the other hand, a traditional nonparametric method, like the kernel density
estimator, may suffer from the difficulty of choosing appropriate
bandwidths \cite{jones1996}. In this paper, we
study a nonparametric method for multivariate
density estimation. This is a sieve maximum likelihood
method which employs simple, but still flexible, binary
partitions to adapt to the data distribution. In the paper, we will
carry out thorough analyses of the convergence rate to quantify the performance of this method as the
dimension increases and the regularity of the true density function varies. These
analyses demonstrate the major advantages of this method, especially
when the dimension is moderately large (e.g. 5 to 50).

\subsection{Challenges in multivariate density estimation}
Most of the established methods for density estimation were initially
designed for the estimation of univariate or low-dimensional density
functions. For example, the popular kernel method (\cite{rosenblatt1956}
and \cite{parzen1962}), which
approximates the density by the superposition of windowed kernel
functions centering on the observed data points, works well for
estimating smooth low-dimensional densities. As the dimension
increases, the accuracy of the kernel estimates becomes very sensitive
to the choice of the window size and the shape of the kernel. To obtain
good performance, both of these choices need to depend on the
data. However, the question of how to adapt these parameters to the
data has not been adequately addressed. This is especially the case for the
kernel which  itself is a multidimensional function. As a result, the
performance of current kernel estimators deteriorates rapidly as
the dimension increases.

The difficulty caused by high dimensionality is also revealed in a classic
result by Charles Stone \cite{stone1980}. In this paper, it was shown
that the optimal rate of convergence for density in $d$-dimensional
space, when the density is assumed to have $p$ bounded derivatives, is
of the order $n^{-\alpha}$, where $\alpha = p/(2p+d)$. When $d$ is small
and the density is smooth (i.e. $p$ is large), then methods such as
kernel density estimation can achieve a convergence rate almost as
good as the parametric rate of $n^{-1/2}$. However, when $d$ is large,
then even if the density has many bounded derivatives, the best
possible rate will still be unacceptably slow. Thus standard
smoothness assumptions on the density will not protect us from the
``curse of dimensionality''. Instead, we must seek alternative
conditions on the underlying class of densities that are general enough
to cover some useful applications under high dimensions, and yet strong
enough to enable the construction of density estimators with fast
convergence. More specifically, suppose $r$ is a parameter that controls
the complexity (in a sense to be made precise) of the density class,
with large value of $r$ indicating low complexity. We would like to
construct density estimators with a convergence rate of the order
$n^{-\gamma(r)}$, where $\gamma(\cdot)$ is an increasing function not
as sensitive to $d$ as that of the traditional methods, and satisfying the property that $\gamma(r) \uparrow
\frac{1}{2}$ as $r \uparrow \infty$. Since this rate is not sensitive
to $d$, it is possible to obtain fast convergence even in high
dimensional cases. For density estimators based on adaptive
partitioning, such a result is established in Theorem
\ref{th:convergencerate} below.  

\subsection{Adaptive partitioning}
\label{subsec:reviewofmethod}
The most basic method for density estimation is the
histogram. With
appropriately chosen bin width, the histogram density value
within each bin is proportional to the relative frequency of the data points in that
bin. Further developments of the method allow the bins to
depend on data, and substantial improvement can be obtained by such ``data-adaptive'' histograms (\cite{scott1979}). 

This idea has been naturally extended to multivariate
cases. Multivariate histograms with data-adaptive partitions have been
studied in \cite{shang1994} and \cite{Ooi2002}. The breakthrough work
of Lugosi and Nobel \cite{lugosi1996} presented general sufficient
conditions for the almost sure $L_1$-consistency based on
data-dependent partitions. Later in \cite{barron1999}, the authors constructed
a multivariate histogram which achieves asymptotic minimax rates
over anisotropic H\"older classes for the $L_2$ loss. Another
closely-related type of methods is multivariate density estimation
based on wavelet expansions (\cite{donoho1996} and
\cite{tribouley1995}). Along this line, in \cite{neumann2000} and \cite{klemela2009} the authors showed that estimators based on
wavelet expansions achieve minimax convergence rates up to a logarithmic
factor over a large scale of anisotropic Besov classes. Apart from these
two types of methods,
in a recent work \cite{wong2010} by Wong and Ma, the authors proposed a Bayesian formulation to learn the data-adaptive
partition in multi-dimensional cases. By employing
sequential importance
sampling (\cite{KLW} and \cite{liu2001monte}), they designed efficient
algorithms (\cite{LJW} and \cite{jiang2015}) to sample
from the posterior distribution. The methods are also shown to perform well empirically
in a range of
continuous and discrete problems, and achieves satisfying performance.

\subsection{Contribution of the paper}
In this paper we study the asymptotic properties of density estimators
based on adaptive partitioning. The data-adaptive partition is
obtained by maximizing the likelihood of the corresponding histogram
on that partition. We start by formulating a complexity
index (denoted by $r$) for a density, with large value of $r$
indicating low complexity in the sense that the density can be
approximated at a fast rate by piecewise constant density functions as the
size of the underlying partition increases. We assume that the
complexity of the true density is known, and study how the size of the
partition of our density estimator should be chosen in order to
achieve fast convergence to the true density. Our analysis shows that
roughly (i.e. up to $\log n$ factors) the achievable rate is
$n^{-(r/(2r+1))}$. Thus when $r$ is large, our estimate will converge to the
true density at a rate close to the parametric rate of $n^{-1/2}$,
not directly depending on the dimension $d$ of the sample space. This is in
contrast with the achievable convergence rate under smoothness
condition (\cite{stone1980}), which deteriorates rapidly as the
dimensional $d$ increases.

In order to gain a deeper understanding of our complexity index, in
this paper we also perform explicit computation of $r$ for density functions
with certain spatial sparsity properties as well as functions of bounded
variation. Our results also imply that, when the true density depends
only on a subset of the variables, our density estimator will
automatically exhibit a ``variable selection'' property. Indeed, the function classes studied in this paper correspond to Besov
classes and anisotropic H\"older classes studied by the previous
literature (\cite{barron1999},  \cite{neumann2000} and
\cite{klemela2009}). But the density functions are characterized by
the conditions which are easier to verify  and closer
to statistical models. In addition to this, in most of the
previous literature regarding convergence rates, the estimator is constructed for a specific
function space. Here, we
introduce the estimator under a quite general formulation, and then derive
the convergence rates by calculating complexity indices $r$ for different function classes. While the results in this paper can provide insights given the
complexity index $r$ of the true density, in practice we do not know
$r$. This raises the question of how to make our density estimator
adaptive to the unknown complexity, in the sense that it will
automatically achieve the optimal rate $n^{-(r/(2r+1))}$ even when $r$ is not
known to us. This question will be taken up in a companion paper (\cite{linxiliu2015}). Extending the present
analysis, we will show in the companion paper that adaptation to unknown complexity can be achieved by a fully Bayesian approach with an exponentially decreasing prior distribution on the size of the partition.

The rest of the paper is organized in the following way. In Section
\ref{sec:notation}, we discuss the partition scheme, introduce
the estimation method, and summarize our
main results on convergence rate. Section
\ref{sec:rate}  focuses on the proof of the main theorem. If the reader is not interested in the mathematical details, this section
can be skipped without affecting the understanding of the later
part. From Section
\ref{sec:spatialadaptation} to Section \ref{sec:variableselection},
we apply our main results to spatial adaptation, estimation of
functions of bounded variation, and variable selection cases respectively.
\section{The maximum likelihood estimators based on adaptive
  partitioning}
\label{sec:notation}
Let $Y_1, Y_2, \cdots, Y_n$ be a sequence of independent random
variables distributed according to a density $f_0(y)$ with respect to a
$\sigma$-finite measure $\mu$ on a measurable space $(\Omega,
\mathcal{B})$. We are interested in the case when $\Omega$ is a
bounded rectangle in $\mathbb{R}^p$ and
$\mu$ is the Lebesgue measure. After translation and scaling, we may assume that the
sample space is the unit cube in $\mathbb{R}^p$,
that is, $\Omega=\{(y^1, y^2, \cdots, y^p): y^l
\in [0,1]\}$. Let $\mathcal{F}=\{f \mbox{ is a nonnegative measurable
  function on } \Omega: \int_\Omega
f d\mu =1 \}$ be the collection of all the density functions on $(\Omega,
\mathcal{B}, \mu)$. $\mathcal{F}$ constitutes the parameter space in this problem.
\subsection{Densities on binary partitions}
\label{subsec:sieve}
Similar to \cite{LJW}, we use \emph{binary partitions} to capture the
features of the true density function. By increasing the complexity of the partitions, we construct a
sequence of density spaces $\Theta_1, \Theta_2, \cdots,
\Theta_I, \cdots$. These spaces are finite dimensional approximations
to the infinite dimensional parameter space $\mathcal{F}$ with
decreasing approximation error. A detailed description of these spaces
is as
follows.

First, we use a recursive procedure to define a binary partition
with $I$ subregions of the unit cube in
$\mathbb{R}^p$. Let $\Omega=\{(y^1, y^2, \cdots, y^p): y^l
\in [0,1]\}$ be that unit cube. In the first step, we choose
one of the coordinates $y^l$ and cut $\Omega$ into two subregions along the midpoint of
the range of $y^l$. That is, $\Omega=\Omega^l_0 \cup \Omega^l_1$, where $\Omega^l_0 = \{y \in \Omega:
y^l \leq 1/2\}$ and $\Omega^l_1 = \Omega \backslash \Omega^l_0$. In this way, we obtain a
partition with two subregions. It is easily observed that the total number of all possible partitions
after one cut is equal to the dimension $p$. Suppose after $I-1$ steps of the
recursion, we already obtained a partition
$\{ \Omega_i \}_{i=1}^I$ with $I$ subregions. In the $I$-th step, further
partitioning of the region is defined as follows: 
\begin{enumerate}
\item Choose a region from $\Omega_1, \cdots, \Omega_I$. Denote it as
  $\Omega_{i_0}$. 
\item Choose one coodinate $y^l$ and divide $\Omega_{i_0}$ into two subregions
  along the midpoint of the range of $y^l$.
\end{enumerate}
Such a partition obtained by $I$ recursive steps is called
a binary partition of size $I+1$. Figure~\ref{fig:binarypartition}
displays all the two dimensional binary partitions when $I$ is 1, 2 and 3.
\begin{figure} 
\label{fig:binarypartition}
\centering
\includegraphics[scale=0.4]{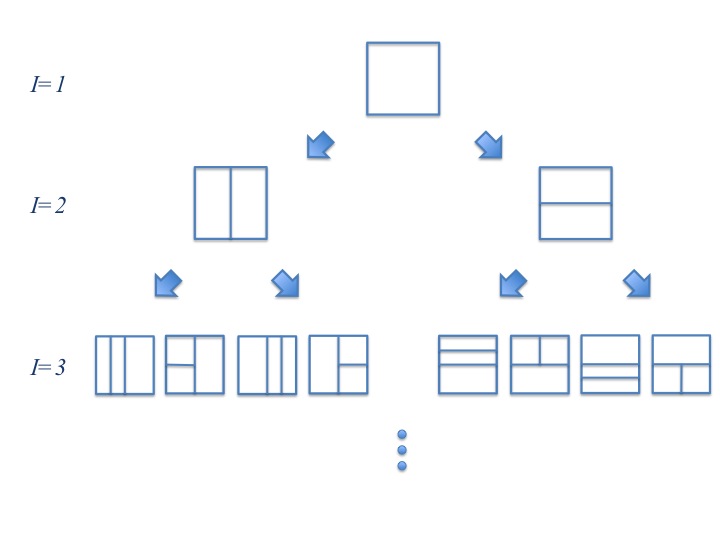}
\caption{Binary partitions}
\end{figure}

Now, we define
\begin{eqnarray*}
\Theta_I = \{f \in \mathcal{F}: f=\sum_{i=1}^I \beta_i
\mathbbm{1}_{\Omega_i},\ \mbox{where\ }\sum_{i=1}^I \beta_i \mu(\Omega_i)
=1,\\\mbox{and\ } \{\Omega_i\}_{i=1}^I\mbox{ is\ a\ binary\ partition\ of\ $\Omega$\ of\ size\ $I$.}\}
\end{eqnarray*}
This is to say, $\Theta_I$ is the collection of piecewise constant density functions
supported by the binary partitions of size $I$. Then these $\Theta_I$
constitute a sequence of approximating spaces to $\mathcal{F}$
(i.e. a sieve, see \cite{Grenander} and \cite{shen1994} for background on sieve theory).

We take the metric on $\mathcal{F}$ and $\Theta_I$ to be Hellinger distance, which is defined by
\begin{equation}
\label{al:hellinger}
\rho(f,g)=\left(\int_{\Omega} \left(\sqrt{f(y)} - \sqrt{g(y)} \right)^2
dy \right)^{1/2},\ f,g \in \mathcal{F}.
\end{equation}
For $f, g \in \Theta_I$, let $f= \sum_{i=1}^I \beta_i^1 \mathbbm{1}_{\Omega_i^1}$ and
$g=\sum_{i=1}^I \beta_i^2 \mathbbm{1}_{\Omega_i^2}$, where $\{
\Omega_i^1\}_{i=1}^I$ and $\{\Omega_i^2\}_{i=1}^I$ 
are binary partitions of $\Omega$. Then the Hellinger distance
between $f$ and $g$ can be written more explicitly as
\begin{equation}
\rho^2(f, g)=\sum_{i=1}^I \sum_{j=1}^I \left(\sqrt{\beta_i^1} -
\sqrt{\beta_j^2}\right)^2 \mu \left(\Omega_i^1 \cap \Omega_j^2 \right).
\end{equation}

We also introduce the Kullback-Leibler divergence, which is
defined to be 
\begin{equation*}
  K(f,g) = \mathbb{E}_{f} \left( \log \frac{f(Y)}{g(Y)} \right).
\end{equation*}
Note the Kullback-Leibler divergence is a stronger distance compared
to the Hellinger distance, in the sense that for any $f, g \in \mathcal{F}$, $
\rho^2(f, g) \leq K(f,g)$. 

We further restrict our interest to a subsect $\mathcal{F}_0 \subset \mathcal{F}$
of densities which satisfy the following two conditions: First, $\int_{\Omega} f^2 <
\infty$. Second, for any $f \in
\mathcal{F}_0$, there exists a sequence of approximations $f_I\in \Theta_I$ such that
$\rho(f, f_I ) \leq A I^{-r}$, where $r$ is a parameter
characterizing the decay rate of the approximation error, $A$ is a
constant that may depend on $f$.
In order to demonstrate that $\mathcal{F}_0$ is still a rich class, from Section
\ref{sec:spatialadaptation} to Section \ref{sec:variableselection}, we
study several
specific density classes belonging to $\mathcal{F}_0$, which are frequently occurred in statistical
modelding.
\subsection{The sieve MLE}
For any $f \in \Theta_I$, the log-likelihood is defined to be
\begin{equation}
  \label{eq:loglikeli}
  L_n(f) = \sum_{j=1}^n \log f(Y_j) = 
  \sum_{i=1}^I N_i \log \beta_i,
\end{equation}
where $N_i$ is the count of data points in $\Omega_i$, i.e., $N_i= \mbox{card}\{j: Y_j \in \Omega_i, 1\leq j\leq n\}$. The maximum likelihood
estimator on $\Theta_I$ is defined to
be
\begin{equation}
  \label{eq:sievemle}
  \hat{f}_{n,I} = \arg\max_{f \in \Theta_I} L_n(f).
\end{equation}
We claim that $\hat{f}_{n,I}$ is well defined. This is true because given the binary partition
$\{ \Omega_i\}_{i=1}^I$, the underlying distribution becomes a multinomial
one, and $(\beta_1, \cdots, \beta_I)$ can be determined
by maximizing the log-likelihood. And within each $\Theta_I$, the number of possible binary partitions
is finite. Since $\Theta_I$ constitute a sieve to $\mathcal{F}_0$, this
sequence of estimators is also called the sieve maximum likelihood
estimator (sieve MLE).

In order to illustrating how the idea of partition learning is incorporated in this framework, given the
binary partition $\mathcal{A} =\{ \Omega_i \}_{i=1}^I$, we can derive
the maximum of the likelihood values achieved by histograms on that partition, which has a closed-form expression
\begin{equation*}
  L_n (\mathcal{A}) = \sum_{i=1}^I N_i \log \left( \frac{N_i}{n
      \mu(\Omega_i)} \right). 
\end{equation*}
We treat this as a \emph{score} of the partition $\mathcal{A}$. By
maximizing the score over all the binary partitions of a fixed size, we
learn a most promising one which adapts to the true density
function. Then $\hat{f}_{n,I}$ is simply the histogram based on that partition.

\subsection{Main results on convergence rate}
Having defined a sequence of maximum likelihood
estimators $\hat{f}_{n,I}$, we are now ready to state the main
results on the rate at which $\hat{f}_{n,I}$
converges to $f_0$.

\begin{theorem}
\label{th:convergencerate}
For any $f_0 \in \mathcal{F}_0$, $\hat{f}_{n,I}$ is the maximum
likelihood estimator over $\Theta_I$. $A$ and $r$ are the
parameters that characterize achievable approximation errors to $f_0$ by the
elements in $\Theta_I$. Assume that
$n$ and $I$ satisfy
\begin{equation}
  \label{eq:thmC2}
  I = \left( (2^8 A^2 r /c_1) \frac{n}{\log n} \right)^{\frac{1}{2r+1}},
\end{equation}
where the constant $c_1$ can be chosen to be in $(0, 1)$. Then the convergence rate of the
sieve MLE is $ n^{-\frac{r}{2r+1}} (\log n)
^{(\frac{1}{2}+\frac{r}{2r+1})}$, in the sense that
\begin{equation*}
 \mathbb{P}_{f_0} \left( \rho (\hat{f}_{n,I} , f_0 ) \geq D n^{-\frac{r}{2r+1}} (\log n)
^{(\frac{1}{2}+\frac{r}{2r+1})} \right) \rightarrow 0,  
\end{equation*}
where $D$ is a constant.  
\end{theorem}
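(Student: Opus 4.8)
The plan is to obtain the theorem as an instance of the general convergence-rate theory for sieve maximum likelihood estimators (see \cite{shen1994} and \cite{Grenander}), which reduces the problem to two checks. (a) \emph{Approximation}: exhibit $f_I^*\in\Theta_I$ with $K(f_0,f_I^*)$, and the associated second log-moment $\mathbb{E}_{f_0}[(\log(f_0/f_I^*))^2]$, of order at most $\epsilon_n^2$. (b) \emph{Complexity}: bound the Hellinger bracketing entropy of $\Theta_I$ so that the entropy integral $\int_{c\,\epsilon_n^2}^{\epsilon_n}H_{[\,]}^{1/2}(u,\Theta_I,\rho)\,du$ is at most a small multiple of $\sqrt{n}\,\epsilon_n^2$. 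Once (a) and (b) hold for an $\epsilon_n\to 0$ with $n\epsilon_n^2\to\infty$, the sieve MLE satisfies $\mathbb{P}_{f_0}(\rho(\hat f_{n,I},f_0)\ge D\epsilon_n)\to 0$. I would carry this out with $\epsilon_n\asymp n^{-r/(2r+1)}(\log n)^{1/2+r/(2r+1)}$, and the choice of $I$ in \eqref{eq:thmC2} --- constant $2^8$ and free parameter $c_1\in(0,1)$ included --- is precisely what makes the two checks consistent.

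For (b) I would argue in two layers. An element of $\Theta_I$ is a binary partition of size $I$ together with a probability vector over its $I$ cells. The number of distinct binary partitions of size $I$ is at most $\prod_{k=1}^{I-1}(kp)\le I!\,p^{I}$, since the $k$-th refinement selects one of at most $k$ current cells and one of $p$ coordinates; this contributes $\le cI\log(Ip)$ to the log-entropy. For a \emph{fixed} partition $\{\Omega_i\}$, the substitution $q_i=\beta_i\mu(\Omega_i)$ reduces the Hellinger formula to $\rho^2=\sum_i(\sqrt{q_i}-\sqrt{q_i'})^2$, so the densities supported by that partition form, in Hellinger distance, an isometric copy of a subset of the Euclidean sphere $S^{I-1}$, whose $\epsilon$-covering number is $(c/\epsilon)^{O(I)}$; passing to Hellinger brackets adds only a term of the same order. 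Hence $H_{[\,]}(\epsilon,\Theta_I,\rho)\lesssim I\bigl(\log(1/\epsilon)+\log(Ip)\bigr)$, which is $\lesssim I\log n$ on the relevant range because $I$ grows polynomially in $n$; the entropy integral is then $\lesssim\sqrt{I\log n}\,\epsilon_n$, and \eqref{eq:thmC2} is exactly the identity making $I\log n/n$ a fixed multiple of $A^2I^{-2r}$, which is itself of smaller order than $\epsilon_n^2$. So (b) holds with room to spare.

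Step (a) is the crux, and I expect it to be the main obstacle: since $f_0$ need not be bounded above and the elements of $\Theta_I$ need not be bounded below, the Hellinger bound $\rho(f_0,f_I)\le AI^{-r}$ furnished by the definition of $\mathcal{F}_0$ does not translate directly into a Kullback--Leibler bound (recall $\rho^2\le K$, so $K$ is genuinely the stronger requirement). The plan is a floor-and-truncate device. First replace $f_I$ by $f_I^*=(1-\alpha_n)f_I+\alpha_n$ with $\alpha_n$ a small negative power of $n$ (e.g.\ $\alpha_n=\tfrac12A^2I^{-2r}$); since the constant $1$ is piecewise constant on every partition, $f_I^*\in\Theta_I$, $f_I^*\ge\alpha_n$ everywhere, and $\rho(f_0,f_I^*)\le AI^{-r}+\sqrt{2\alpha_n}\le 2AI^{-r}$. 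Then, writing $K(f_0,f_I^*)=\int f_I^*\,\phi(f_0/f_I^*)$ with $\phi(x)=x\log x-x+1$ and splitting $\Omega$ into $\{f_0\le M_n\}$ and $\{f_0>M_n\}$ for a large power $M_n$ of $n$: on the bounded part $f_0/f_I^*\le M_n/\alpha_n$, so the elementary inequality $\phi(x)\le c(\sqrt{x}-1)^2(1+\log_+x)$ gives a contribution $\lesssim\log(M_n/\alpha_n)\cdot\rho^2(f_0,f_I^*)\lesssim(\log n)\,A^2I^{-2r}$; on the tail, $\int_\Omega f_0^2<\infty$ does the job (via $\mu(f_0>M_n)\le\|f_0\|_2^2/M_n^2$ and, since $t\mapsto(\log t)/t$ decreases past $e$, $\int_{f_0>M_n}f_0\log f_0\le(\log M_n/M_n)\|f_0\|_2^2$), combined with $\log(f_0/f_I^*)\le\log f_0+\log(1/\alpha_n)$, making that part $o(A^2I^{-2r})$ once $M_n$ is a large enough power of $n$. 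The second log-moment is handled the same way; its bound carries one further $\log n$, but that enters only through a $\sqrt{(\cdot)/n}$ term and is negligible against $\epsilon_n^2$. Thus $K(f_0,f_I^*)\lesssim(\log n)\,A^2I^{-2r}\asymp\epsilon_n^2$, which is (a). This single logarithmic loss, incurred in passing from the Hellinger approximation rate $AI^{-r}\asymp n^{-r/(2r+1)}(\log n)^{r/(2r+1)}$ to the Kullback--Leibler one, is exactly what produces the extra $(\log n)^{1/2}$ in the statement.

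Finally, with (a) and (b) in hand and $n\epsilon_n^2\asymp n^{1/(2r+1)}(\log n)^{1+2r/(2r+1)}\to\infty$, the sieve MLE theorem delivers $\mathbb{P}_{f_0}(\rho(\hat f_{n,I},f_0)\ge D\epsilon_n)\to 0$, which is the claim. The routine ingredients are the combinatorial count of partitions, the spherical entropy for a fixed partition, and the bookkeeping behind \eqref{eq:thmC2}; the one delicate point --- and the sole origin of the $(\log n)^{1/2}$ --- is controlling the Kullback--Leibler bias when $f_0$ is unbounded and the members of the sieve need not be bounded away from zero.
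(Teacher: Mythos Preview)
Your overall architecture is the same as the paper's: invoke the Wong--Shen sieve MLE machinery, so that the proof reduces to (b) a bracketing-entropy bound on $\Theta_I$ and (a) a Kullback--Leibler approximation bound, with the choice \eqref{eq:thmC2} balancing the two. Your treatment of (b) --- count the binary partitions by at most $I!\,p^{I}$ and then cover the simplex for a fixed partition via the isometry $f\mapsto(\sqrt{\beta_i\mu(\Omega_i)})_i$ --- is exactly what the paper does (their Lemmas~\ref{lemma:boundofH} and~\ref{lemma:dballupperbound}), and the subsequent entropy-integral check matches their Corollary~\ref{cor:largedeviation}.

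The genuine divergence is in (a), the passage from $\rho(f_0,f_I)\le AI^{-r}$ to $K(f_0,g)\lesssim A^2 I^{-2r}\log I$. The paper does \emph{not} floor the approximant. Instead it replaces $f_I$ by the density $g$ built from the $L^2$-projection of $\sqrt{f_0}$ onto the partition supporting $f_I$ (i.e.\ $h=\sum_i(\mu(\Omega_i)^{-1}\int_{\Omega_i}\sqrt{f_0})^2\mathbbm{1}_{\Omega_i}$, then $g=h/\int h$), checks $\rho(f_0,g)\le 2\rho(f_0,f_I)$, and then verifies the Wong--Shen moment condition $M_{1/4}^2=\int_{\{f_0/g>e^4\}}f_0(f_0/g)^{1/4}\le(\int f_0^2)^{1/2}$ by a short Cauchy--Schwarz computation (their Lemma~\ref{lemma:KLbound}, invoking their Lemma~\ref{lemma:hellingertokl}). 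This is where the hypothesis $\int f_0^2<\infty$ enters for them, cleanly and in one line. Your floor-and-truncate argument ($f_I^*=(1-\alpha_n)f_I+\alpha_n$, then split on $\{f_0\le M_n\}$) is correct and more elementary --- it avoids quoting Wong--Shen's Theorem~5 and works from the pointwise inequality $\phi(x)\le c(\sqrt{x}-1)^2(1+\log_+x)$ --- but it is longer and introduces two auxiliary sequences $\alpha_n,M_n$ that must be tuned. The paper's route buys a shorter, structurally cleaner proof that exploits the specific form of the sieve; yours buys self-containment and would transfer more readily to sieves where the projection trick is unavailable. Both land on the same bound $K\lesssim A^2I^{-2r}\log I$ and hence the same rate.
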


\begin{remark}
  It is possible to partition the sample space in a more flexible way. In
  particular, the analysis and resulting rate remain the same if we replace binary
  partition at the mid-point by binary partition at a point chosen from a
  fixed sized grid (e.g. regular equi-spaced grid).
\end{remark}

From this theorem, we see that the convergence rate does not
directly depend on the dimension of the problem. Instead, it only
depends on how well the true density can be approximated by the
sieve. As $r$ increases, up to a $\log n$ term, the rate gets close to
the parametric rate of $n^{-1/2}$. 

The analysis in this paper will demonstrate that the optimal convergence rate
can be achieved by balancing the sample size with the complexity of the
approximating spaces. On one hand, the complexity of $\Theta_I$ affects the
convergence rate in a way that, the richer the approximating spaces
the lower bias the estimators have. Conversely, given a sample $Y_1, Y_2, \cdots, Y_n$ of fixed size, there is a point beyond
which the limited amount of information conveyed in the
data may be overwhelmed by the overly-complex approximating spaces. A
major contribution of the main result is that it clarifies how to strike
the balance between the sample size and the complexity of the
approximating spaces.
\section{Proof of Theorem \ref{th:convergencerate}}
\label{sec:rate}
This section is devoted to the proof of the main theorem. Studies of
convergence rate invariably rely on the previous
results from studies of empirical process indexed by
log-likelihood ratios. However, while the results in the classical
work by Wong and Shen (\cite{shen1994}
and \cite{wong1995}) are the most applicable ones to our study, they
must be modified to adapt to the current settings. The following
is an outline of the proof.

In Section \ref{subsec:entropy} we briefly
discuss {\em metric entropy with bracketing}, which measures the
complexity of the approximating spaces by
``counting'' how many pairs of functions in an $\epsilon$-net are
needed to provide simultaneous upper and lower bounds of all the elements. An
important result of this section is an upper bound for the bracketing
metric entropy of $\Theta_I$. In Section
\ref{subsec:likelihoodratio}, a large-deviation type inequality for
the likelihood ratio surface follows.

Previous results on the convergence rate of sieve MLE
assume that the true parameter can be approximated by the sieve under
Kullback-Leibler divergence. Here, we switch to the weaker
Hellinger distance, because under the current settings, we can obtain
an explicit bound for the Kullback-Leibler divergence in terms of the
Hellinger distance. Thus our results are more general than those in
\cite{wong1995} for this type of density estimation problems.

Finally in Section
\ref{subsec:rate}, we establish the main result of the
convergence rate. After splitting the tail probability into two parts
corresponding to variance and bias,
we apply the results obtained in Section \ref{subsec:likelihoodratio}
and \ref{subsec:KLbound} to bound each of them respectively.

\subsection{Calculation of the metric entropy with bracketing}
\label{subsec:entropy}
A general discussion of {\em metric entropy} can be found in
\cite{kolmogorov1992selected}.
In this section, we introduce a form of metric
entropy with bracketing corresponding to current parameter space, 
and provide an upper bound for the bracketing metric entropy of the
approximating spaces defined in Section \ref{subsec:sieve}. 
\begin{definition}
  Let $ (\Theta, \rho)$ be a seperable pseudo-metric
  space. $\Theta(\epsilon)$ is a finite set of pairs of functions $\{
  (f_j^L, f_j^U), j=1,\cdots, N\}$ satisfying
  \begin{equation}
    \label{eq:entropyDefC1}
    \rho( f_j^L, f_j^U) \leq \epsilon\ for\ j=1,\cdots,N,
  \end{equation}
and for any $f \in \Theta$, there is a $j$ such that
\begin{equation}
  \label{eq:entropyDefC2}
  f_j^L \leq f \leq f_j^U.
\end{equation}
Let 
\begin{equation}
  \label{eq:entropyDefN}
  N(\epsilon, \Theta, \rho) = min \{card\ \Theta(\epsilon): (\ref{eq:entropyDefC1})\ and\ (\ref{eq:entropyDefC2})\ are\ satisfied
\}.
\end{equation}
Then, we define the metric entropy with bracketing of $\Theta$ to be 
\begin{equation}
  \label{eq:entropyDef}
  H(\epsilon, \Theta, \rho) = \log N(\epsilon, \Theta, \rho).
\end{equation}
\end{definition}

Recall that $\Theta_1, \cdots, \Theta_I, \cdots$ are the approximating spaces defined in section
\ref{subsec:sieve}. The
next two lemmas are devoted to an upper bound for the bracketing metric entropy of
$\Theta_I$.
\begin{lemma}
  \label{lemma:boundofH}
Take $\rho$ to be the Hellinger distance. Let $\Theta_I^{\mathcal{A},
  d} = \{ f \in \Theta_I:\ f\ is\ supported\ by\ the\ binary\
partition\ \mathcal{A} = \{ \Omega_i\}_{i=1}^I,\ and\ \rho(f, f_0) \leq d \}$. Then, 
\begin{equation}
  \label{eq:boundofH}
  H(u, \Theta_I^{\mathcal{A}, d}, \rho) \leq \frac{I}{2} \log I + I
  \log\frac{d}{u} + b', 
\end{equation}
where $b'$ is a constant not dependent on the binary partition.
\end{lemma}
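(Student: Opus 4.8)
The plan is to reparametrize $\Theta_I^{\mathcal{A},d}$ by points on a Euclidean sphere and then exhibit explicit piecewise-constant brackets coming from a coordinatewise grid. Writing $f = \sum_{i=1}^I \beta_i \mathbbm{1}_{\Omega_i}$ and setting $v_i = \sqrt{\beta_i\,\mu(\Omega_i)}$, the constraint $\sum_i \beta_i\mu(\Omega_i)=1$ becomes $\sum_i v_i^2 = 1$, and $\sqrt{f}$ equals $v_i/\sqrt{\mu(\Omega_i)}$ on $\Omega_i$. Plugging this into the explicit Hellinger formula for functions supported by the common partition $\mathcal{A}$ collapses the double sum and gives $\rho^2(f,g) = \sum_{i=1}^I (v_i^f - v_i^g)^2$; thus in the $v$-coordinates $\rho$ is just the $\ell_2$-distance, and---since the bracketing definition only asks for pairs of nonnegative functions, not densities---a pointwise bracket $f^L \le f \le f^U$ by elements supported on $\mathcal{A}$ corresponds exactly to coordinatewise inequalities $v_i^L \le v_i^f \le v_i^U$.

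Next I localize each coordinate using $\rho(f,f_0)\le d$. With $w_i := \mu(\Omega_i)^{-1/2}\int_{\Omega_i}\sqrt{f_0}\,d\mu$ and $p_i := \int_{\Omega_i} f_0\,d\mu$, expanding $\rho^2(f,f_0) = \sum_i \int_{\Omega_i}(\sqrt{\beta_i}-\sqrt{f_0})^2\,d\mu$ and completing the square in each term yields $\rho^2(f,f_0) = \sum_i \big[(v_i^f - w_i)^2 + (p_i - w_i^2)\big]$, where $p_i - w_i^2 \ge 0$ by the Cauchy--Schwarz inequality on $\Omega_i$. Hence $(v_j^f - w_j)^2 \le \rho^2(f,f_0)\le d^2$ for every index $j$, so each $v_j^f$ lies in the interval $J_j := [w_j-d,\,w_j+d]\cap[0,1]$, of length at most $\min(2d,1)\le 2d$, whose position depends on $f_0$ and $\mathcal{A}$ but whose length does not.

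For the brackets, discretize every $J_i$ by a grid of spacing $\delta = u/\sqrt{I}$---at most $2d\sqrt{I}/u + 2$ points per coordinate---and to each selection of one grid cell $[v_i^{-},v_i^{+}]$ in each coordinate assign $f^L = \sum_i (v_i^{-})^2\mu(\Omega_i)^{-1}\mathbbm{1}_{\Omega_i}$ and $f^U = \sum_i (v_i^{+})^2\mu(\Omega_i)^{-1}\mathbbm{1}_{\Omega_i}$. Every $f$ in the class lands in some cell in each coordinate, so $f^L\le f\le f^U$ pointwise, while $\rho^2(f^L,f^U) = \sum_i (v_i^{+}-v_i^{-})^2 \le I\delta^2 = u^2$ by the identity of the first paragraph. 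The number of such pairs is at most $(2d\sqrt{I}/u + 2)^I$, giving $H(u,\Theta_I^{\mathcal{A},d},\rho)\le I\log(2d\sqrt{I}/u + 2)\le \frac{I}{2}\log I + I\log(d/u) + b'$ for a universal constant $b'$; the additive ``$+2$'' and the degenerate regime $u\ge 2d\sqrt{I}$ (where the $\rho$-diameter of the class is already $\le u$, so a single bracket suffices) are absorbed into $b'$, and the empty-class case is trivial. Since $\delta$ and the per-coordinate count depend only on $I,d,u$, the constant $b'$ does not see $\mathcal{A}$. The delicate point---and the reason the bound carries the extra $\frac{I}{2}\log I$ beyond a plain covering estimate of a cap of $S^{I-1}$---is that a pointwise bracket on a \emph{fixed} partition demands control of each coordinate separately ($\ell_\infty$), whereas $\rho(f,f_0)\le d$ only bounds the $v$-coordinates in $\ell_2$; reconciling the two forces spacing $u/\sqrt{I}$ rather than $u$, inflating the per-coordinate count by $\sqrt{I}$. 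Everything else---the square-completion identity, checking that the brackets straddle every $f$, and the constant bookkeeping---is routine.
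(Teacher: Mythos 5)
Your proof is correct and follows essentially the same route as the paper's: both pass to the Euclidean parametrization $v_i=\sqrt{\beta_i\,\mu(\Omega_i)}$ under which $\rho$ becomes the $\ell_2$-distance, localize each coordinate via the Cauchy--Schwarz projection onto piecewise constants, and count a coordinatewise grid of spacing $u/\sqrt{I}$, yielding $\bigl(d\sqrt{I}/u+O(1)\bigr)^I$ brackets. If anything, your version is more explicit than the paper's (which routes through an $\ell_2$/$\ell_\infty$ ball comparison and a covering-number bound without exhibiting the pointwise brackets), and you share with the paper the same harmless imprecision that the leftover term is really $O(I)$ rather than a constant $b'$, which is absorbed downstream in any case.
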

\begin{proof}
 Assume $f=\sum_{i=1}^I
  \beta_i \mathbbm{1}_{\Omega_i}$. When the binary partitions $\{\Omega_i\}_{i=1}^I$ are fixed, there exits a one-to-one
  correspondance between any $f \in \Theta_I^{\mathcal{A}, d}$ and an $I$-dimensional
  vector $(\sqrt{\beta_1 \mu(\Omega_1)}, \cdots, \sqrt{\beta_I
    \mu(\Omega_I)})$.
As a consequence of Cauchy-Schwartz inequality,
  \begin{eqnarray*}
\rho(f, f_0)^2&=&\sum_{i=1}^I \int_{\Omega_i} ( \sqrt{\beta_i}
    -\sqrt{f_0(x)} )^2 \mu(dx)\\
&\geq&\sum_{i=1}^I \mu(\Omega_i) (\int_{\Omega_i}
(\sqrt{\beta_i}-\sqrt{f_0(x)}) \frac{\mu(dx)}{\mu(\Omega_i)})^2\\
&=&\sum_{i=1}^I \mu(\Omega_i) (\sqrt{\beta_i} - \frac{\int_{\Omega_i}
  \sqrt{f_0(x)} \mu(dx)} {\mu(\Omega_i)})^2. 
  \end{eqnarray*}
Then, we have,
\begin{eqnarray*}
& & \{(\sqrt{\beta_1 \mu(\Omega_1)}, \cdots, \sqrt{\beta_I \mu(\Omega_I)}): \sum_{i=1}^I \int_{\Omega_i} ( \sqrt{\beta_i}
    -\sqrt{f_0(x)} )^2 \mu(dx) \leq d^2\}\\
&\subset& \{(\sqrt{\beta_1 \mu(\Omega_1)}, \cdots, \sqrt{\beta_I \mu(\Omega_I)}):
\sum_{i=1}^I (\sqrt{\beta_i \mu(\Omega_i)} - \frac{\int_{\Omega_i}
  \sqrt{f_0(x)} \mu(dx)} {\sqrt{\mu(\Omega_i)}}  )^2 \leq d^2 \} \\
&=:& B_I ^{\mathcal{A}, d}.
\end{eqnarray*} 
If we treat the element in $\Theta_I^{\mathcal{A},d}$ as the
$I$-dimensional vector $( \sqrt{\beta_1 \mu(\Omega_1)}, \cdots, \sqrt{\beta_I
    \mu(\Omega_I )} )$, then from the above inclusion relation we learn that,
  $\Theta_I^{\mathcal{A},d} \subset B_I^{\mathcal{A},d}$. We also note
  that the
  Hellinger distance on  $B_I^{\mathcal{A},d}$ is equivalent to the
  $L_2$ norm on the $I$-dimensional Euclidean space. Thus,
\begin{equation}
  \label{eq:hellingertol2}
  N(u, \Theta_I^{\mathcal{A}, d}, \rho) \leq N(u, B_I^{\mathcal{A}, d},\|
  \cdot \|_2).
\end{equation}
Because the metric entropy is invariant under translation, calculating the bracketing
metric entropy of $B_I^{\mathcal{A}, d}$ is equivalent to calculating
that of
\begin{equation*}
\tilde{B}_I^{\mathcal{A}, d} := \{ \left(\sqrt{\beta_1 \mu(\Omega_1)}, \cdots,
\sqrt{\beta_I \mu(\Omega_I)} \right): \sum_{i=1}^I \left(
\sqrt{\beta_i \mu(\Omega_i)} \right)^2 \leq d^2 \}.
\end{equation*}
The unit sphere under $L_2$-norm is
\begin{equation*}
  S=\{ \left(\sqrt{\beta_1 \mu(\Omega_1)}, \cdots, \sqrt{\beta_I
      \mu(\Omega_I)} \right): \sum_{i=1}^I \left(\beta_i \mu(\Omega_i)
    \right) \leq 1 \}.
\end{equation*}
The unit sphere under $L_{\infty}$-norm is 
\begin{equation*}
  S_{\infty} = \{ \left(\sqrt{\beta_1 \mu(\Omega_1)}, \cdots,
    \sqrt{\beta_I \mu(\Omega_I)} \right): \max_{1 \leq i
    \leq I} \sqrt{\beta_i \mu(\Omega_i)} \leq 1 \}.
\end{equation*}
Note that  $ \max_{1 \leq i \leq I} \sqrt{\beta_i \mu(\Omega_i)} \leq 1/
\sqrt{I}$ implies that $\sum_{i=1}^I \beta_i \mu(\Omega_i) \leq 1$, and $\sum_{i=1}^I \beta_i \mu(\Omega_i) \leq d^2$ implies that
$\max_{1 \leq i \leq I}\sqrt{\beta_i \mu(\Omega_i)} \leq d$, we have 
\begin{equation}
  \label{eq:relation}
 S \subset \tilde{B}_I^{\mathcal{A}, d}
  \subset d S_{\infty} \mbox{\ and\ } S_{\infty} \subset \sqrt{I} S. 
\end{equation}
Therefore, 
\begin{eqnarray*}
  N(u, \Theta_I ^{\mathcal{A}, d}, \rho) &\leq& N(u, \tilde{B}_I^{\mathcal{A}, d}, \| \cdot \|_2) \\
&\leq& \left(\frac{d \sqrt{I}}{u} +2 \right)^I \\
&\leq& b' I^{I/2} (d/u)^I,
\end{eqnarray*}
where $b'$ is a constant not dependent on the partiton. The
desired result follows.
\end{proof}

\begin{lemma}
  \label{lemma:dballupperbound}
Under the same assumptions as in Lemma \ref{lemma:boundofH}, let $\Theta_I^d = \{ f \in \Theta_I: \rho(f, f_0) \leq
d\}$. Then, 
\begin{eqnarray}
  \label{eq:dballupperbound}
\nonumber  & &H(u, \Theta_I^d, \rho) \\
&\leq& I \log p + (I+1) \log(I+1) + \frac{I}{2} \log I + I \log \frac{d}{u} +b,
\end{eqnarray}
where $b$ is a constant not dependent on $I$ or $d$. 
\end{lemma}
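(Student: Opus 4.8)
The plan is to deduce this from Lemma \ref{lemma:boundofH} by decomposing $\Theta_I^d$ according to which binary partition supports a given density, and then counting the partitions. Concretely, I would first observe that every $f\in\Theta_I^d$ is supported by some binary partition $\mathcal{A}$ of size $I$ and satisfies $\rho(f,f_0)\le d$, so that
\begin{equation*}
\Theta_I^d=\bigcup_{\mathcal{A}\in\mathcal{P}_I}\Theta_I^{\mathcal{A},d},
\end{equation*}
where $\mathcal{P}_I$ denotes the finite set of all binary partitions of $\Omega$ of size $I$.

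Next I would use the fact that a bracketing net of a finite union can be taken to be the union of bracketing nets of the pieces: choosing for each $\mathcal{A}$ a bracketing $u$-net of $\Theta_I^{\mathcal{A},d}$ of minimal cardinality $N(u,\Theta_I^{\mathcal{A},d},\rho)$, their union still brackets every $f\in\Theta_I^d$ (via the piece containing $f$) and every bracket still has Hellinger width at most $u$, so that (\ref{eq:entropyDefC1}) and (\ref{eq:entropyDefC2}) hold for the combined family. Hence
\begin{equation*}
N(u,\Theta_I^d,\rho)\le\sum_{\mathcal{A}\in\mathcal{P}_I}N(u,\Theta_I^{\mathcal{A},d},\rho)\le|\mathcal{P}_I|\cdot\max_{\mathcal{A}\in\mathcal{P}_I}N(u,\Theta_I^{\mathcal{A},d},\rho).
\end{equation*}

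To control $|\mathcal{P}_I|$ I would appeal to the recursive construction of Section \ref{subsec:sieve}: every binary partition of size $I$ is obtained from a sequence of $I-1$ cuts, and at the $k$-th cut there are at most $k$ candidate regions and $p$ candidate coordinates, so there are at most $\prod_{k=1}^{I-1}(kp)=p^{I-1}(I-1)!$ partitions (this over-counts, since different cut sequences may yield the same partition, but that is harmless). Combining this with Lemma \ref{lemma:boundofH}, which gives $N(u,\Theta_I^{\mathcal{A},d},\rho)\le b' I^{I/2}(d/u)^I$ with $b'$ independent of $\mathcal{A}$, taking logarithms, and using the crude bounds $\log\big((I-1)!\big)\le(I+1)\log(I+1)$ and $p^{I-1}\le p^{I}$, I obtain
\begin{equation*}
H(u,\Theta_I^d,\rho)\le I\log p+(I+1)\log(I+1)+\frac{I}{2}\log I+I\log\frac{d}{u}+b,
\end{equation*}
with $b=\log b'$, a constant depending on neither $I$ nor $d$.

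The only points needing a little care are verifying that concatenating the bracketing nets of the pieces really produces a bracketing net of the union (both the width requirement and the covering requirement survive) and checking that, after taking logarithms, the partition count is absorbed into the stated $(I+1)\log(I+1)$ term. Neither is a serious obstacle: once Lemma \ref{lemma:boundofH} is in hand, the argument is essentially a union bound together with an elementary count of binary trees carrying coordinate labels.
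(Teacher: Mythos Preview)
Your proposal is correct and follows essentially the same approach as the paper: decompose $\Theta_I^d$ over the finitely many binary partitions, bound the number of such partitions by $p^{I-1}(I-1)!$ (the paper uses the slightly cruder $p^I I!$), multiply by the per-partition bracketing number from Lemma~\ref{lemma:boundofH}, and take logarithms. The only differences are cosmetic---you spell out the union-bound step for bracketing nets a bit more explicitly and obtain a marginally sharper partition count---but the argument is the same.
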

\begin{proof}
According to
the construction of the sieve, given the size $I$, the
number of possible binary partitions is upper bounded by $p^I I!$ ($p$
is the dimension of the Euclidean space). Therefore, 
\begin{eqnarray*}
  \nonumber N(u, \Theta_I^d, \rho) &\leq& p^I I! N(u,
  \Theta_I^{\mathcal{A}, d}, \rho) \\
&\leq& b' p^I I! I^{I/2} (d/u)^I,
\end{eqnarray*}
and,
\begin{eqnarray}
  \label{eq:upperE5}
\nonumber  & &\ H(u, \Theta_I^d,\rho) \\
&\leq& I \log p + (I+1) \log(I+1) + \frac{I}{2} \log I + I \log \frac{d}{u} +b.
\end{eqnarray}
\end{proof}

\subsection{An inequality for the likelihood ratio surface}
\label{subsec:likelihoodratio}
In this section, we focus on bounding the tail behavior of the likelihood
ratio. First, we cite a theorem in \cite{wong1995},
which gives a uniform exponential bound for likelihood ratios. 

\begin{theorem}[Wong and Shen (1995)]
\label{thm:likelihoodratio}
  Let $\rho$ be the Hellinger distance and $\mathcal{P}_n$ be a space of
  densities. There exist positive constants $a>0$, $c$, $c_1$ and $c_2$, such that,
  for any $\epsilon>0$, if 
  \begin{equation}
    \label{eq:entropycondition}
    \int_{\epsilon^2/2^8}^{\sqrt{2}\epsilon}H^{1/2} ( u/a, \mathcal{P}_n,
    \rho) du \leq c n^{1/2} \epsilon^2,
  \end{equation}
then
\begin{equation*}
  \mathbb{P}_{f_0} \Big( \sup_{\{ \rho(f, f_0) \geq \epsilon, f \in
    \mathcal{P}_n \}} \prod_{i=1}^n \frac{f(Y_i)}{f_0(Y_i)} \geq \exp( -c_1
  n \epsilon^2) \Big) \leq 4 \exp(-c_2 n \epsilon^2),
\end{equation*}
where $\mathbb{P}_{f_0}$ is understood to be the outer probability
mesure under $f_0$. The constants $c_1$ and $c_2$ can be chosen in
$(0,1)$ and $c$ can be set as $(2/3)^{5/2} /512$.
\end{theorem}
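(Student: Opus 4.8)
The plan is to reduce the supremum of the likelihood ratio to a one‑sided deviation bound for an empirical process whose mean is governed by the Hellinger distance, then peel the density space $\mathcal P_n$ into Hellinger shells and bound each shell by a bracketing maximal inequality obtained by chaining; the explicit constants $c,c_1,c_2$ and the prefactor $4$ come out of bookkeeping the chaining and the peeling. I would begin from the square‑root transform $g_f := (f/f_0)^{1/2}$ and the two elementary identities $\mathbb{E}_{f_0} g_f = \int \sqrt{f f_0}\,d\mu = 1 - \tfrac12 \rho^2(f,f_0)$ and $\|g_f - 1\|_{L_2(f_0)}^2 = \int (\sqrt f - \sqrt{f_0})^2\,d\mu = \rho^2(f,f_0)$, noting the first expectation is finite with no boundedness assumption on $f/f_0$. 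Using $\log x \le x-1$, on the event $\prod_{i=1}^n f(Y_i)/f_0(Y_i) \ge e^{-c_1 n \epsilon^2}$ one has
\begin{equation*}
-\tfrac12 c_1 \epsilon^2 \le \tfrac1n \sum_{i=1}^n \log g_f(Y_i) \le \tfrac1n \sum_{i=1}^n \big(g_f(Y_i)-1\big) = \nu_n(g_f) - \tfrac12 \rho^2(f,f_0),
\end{equation*}
where $\nu_n(g) := \tfrac1n \sum_i \{g(Y_i) - \mathbb{E}_{f_0} g\}$; hence that event forces $\nu_n(g_f) \ge \tfrac12 \rho^2(f,f_0) - \tfrac12 c_1 \epsilon^2$, a threshold quadratic in the Hellinger radius of $f$ once $\rho(f,f_0) \ge \epsilon$ and $c_1 < 1$. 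This is the only probabilistic reduction needed, and it already circumvents the unboundedness of the log‑likelihood ratio.

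Next I would peel: write $\{f \in \mathcal P_n : \rho(f,f_0) \ge \epsilon\} = \bigcup_{k \ge 0} \mathcal S_k$ with $\mathcal S_k := \{f \in \mathcal P_n : \delta_k \le \rho(f,f_0) < 2\delta_k\}$, $\delta_k := 2^k \epsilon$ (only finitely many shells are nonempty since $\rho \le \sqrt 2$ for densities). On $\mathcal S_k$ the display above shows the bad event implies $\sup_{f \in \mathcal S_k} \nu_n(g_f) \ge c' \delta_k^2$ for a fixed $c' > 0$, and $\{g_f : f \in \mathcal S_k\}$ lies in an $L_2(f_0)$‑ball of radius $2\delta_k$ about the constant $1$, with bracketing entropy at scale $u$ equal, up to constants, to $H(u,\mathcal P_n,\rho)$ since $f \mapsto g_f$ maps Hellinger brackets to $L_2(f_0)$ brackets of the same width. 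The heart of the argument is then the maximal inequality
\begin{equation*}
\mathbb{P}_{f_0}\Big( \sup_{f \in \mathcal S_k} \nu_n(g_f) \ge c' \delta_k^2 \Big) \le 4 \exp(-c_2 n \delta_k^2),
\end{equation*}
valid whenever the local entropy integral $\int_{\delta_k^2/2^8}^{\sqrt 2 \delta_k} H^{1/2}(u/a, \mathcal P_n, \rho)\,du \le c\, n^{1/2} \delta_k^2$. I would establish this by chaining: refine the brackets dyadically from the coarsest scale $\sim \delta_k$ down to the finest scale $\sim \delta_k^2$, truncate $g_f$ at each level at a height tied to that level (the truncated tails being negligible because the bracket widths are small in $L_2(f_0)$), apply a Bernstein/Bennett exponential bound to each of the $\exp H(\cdot)$ increments between consecutive brackets, absorb the entropy at each scale into the exponential decay, and sum the resulting geometric series over levels; the scales $\delta_k^2/2^8$ and $\sqrt 2 \delta_k$ are precisely where the chaining must start and stop. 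Finally, splitting the integral at $\sqrt2\epsilon$ and using monotonicity of $H$ together with $\delta_k \ge \epsilon$, one checks that the hypothesis $\int_{\epsilon^2/2^8}^{\sqrt 2 \epsilon} H^{1/2}(u/a,\mathcal P_n,\rho)\,du \le c\, n^{1/2}\epsilon^2$ implies the local condition for every shell, up to a harmless absolute factor.

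To finish, I would reassemble the pieces: $\sum_{k \ge 0} 4\exp(-c_2 n \delta_k^2) = 4 \sum_{k\ge 0} \exp(-c_2 n 4^k \epsilon^2)$ is at most a constant multiple of $\exp(-c_2 n \epsilon^2)$ once $n\epsilon^2$ exceeds an absolute constant (otherwise the claimed inequality is trivial), and a sharper accounting on the innermost shell — where the $\tfrac12 c_1 \epsilon^2$ correction is comparable to $\tfrac12 \delta_0^2$, which is why one needs $c_1 < 1$ — together with a crude union bound over the finitely many coarsest shells trims the prefactor to $4$. Fixing the shell ratio to $2$, the ratio of finest bracket width to shell radius‑squared to $2^{-8}$, and $c_1$ anywhere in $(0,1)$, and then propagating these choices through the per‑level Bernstein constants and the two geometric summations, pins down $c = (2/3)^{5/2}/512$ and yields $c_1,c_2 \in (0,1)$, with $a$ the explicit absolute constant produced by the truncation step.

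The step I expect to be the main obstacle is the chaining maximal inequality itself, with \emph{exponential} tails and \emph{explicit} universal constants: one must control the empirical process of the unbounded functions $g_f$ by truncating at scale‑dependent heights while keeping both the truncation error and the increment variances bounded by the $L_2(f_0)$ bracket widths, and one must then make the constants coming from the per‑level Bernstein bounds, the summation over chaining levels, and the summation over Hellinger shells all close up simultaneously. This is exactly why the statement is phrased in terms of bracketing entropy with those particular integration limits rather than ordinary metric entropy.
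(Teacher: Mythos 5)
The paper does not prove this statement at all: it is imported verbatim (as the attribution in the theorem header indicates) from Wong and Shen (1995), and the authors use it as a black box in Corollary \ref{cor:largedeviation}. So there is no in-paper proof to compare against; the relevant comparison is with the original source. Your sketch faithfully reproduces the architecture of that original proof: the reduction via $\log x\le x-1$ applied to $g_f=(f/f_0)^{1/2}$, using $\mathbb{E}_{f_0}g_f=1-\tfrac12\rho^2(f,f_0)$ and $\|g_f-1\|_{L_2(f_0)}=\rho(f,f_0)$ to turn the likelihood-ratio event into a one-sided deviation of the centered empirical process $\nu_n(g_f)$; the peeling into Hellinger shells $2^k\epsilon\le\rho<2^{k+1}\epsilon$ (finitely many, since $\rho\le\sqrt2$); and the observation that Hellinger brackets on $f$ become $L_2(f_0)$ brackets of the same width on $g_f$, which is exactly why the hypothesis is phrased in bracketing entropy with limits $\epsilon^2/2^8$ and $\sqrt2\epsilon$. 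The identities and the shell-wise threshold $\nu_n(g_f)\ge\tfrac12(1-c_1)\delta_k^2$ are all correct.

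That said, your argument is a plan rather than a proof at its decisive step. The exponential maximal inequality for $\sup_{f\in\mathcal S_k}\nu_n(g_f)$ over a bracketing class of \emph{unbounded} functions is, in Wong and Shen's paper, a separate theorem whose proof (adaptive truncation heights tied to the chaining level, Bernstein bounds on each increment, and the simultaneous closing of all constants to produce $a$, $c=(2/3)^{5/2}/512$, the prefactor $4$, and $c_1,c_2\in(0,1)$) is the technical core; you correctly identify it as the main obstacle but do not carry it out, and the claim that the stated numerical constants ``come out of the bookkeeping'' is asserted rather than verified. For the purposes of this paper that gap is immaterial, since the authors themselves take the theorem as given, but your write-up should be read as a correct reconstruction of the proof strategy of the cited result, not as a self-contained derivation of it.
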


Combining the theorem with the entropy bounds in Section
\ref{subsec:entropy} gives the desired inequality, which is summarized
in the next corollary.

\begin{corollary}
\label{cor:largedeviation}
  Let $\delta_{n,I} = (\frac{I \log I}{n / \log n}) ^{1/2}$. When $n$ and $I$ are sufficiently
  large, we have 
  \begin{equation*}
    \mathbb{P}_{f_0} \Big( \sup_{\{ \rho(f, f_0) \geq \delta_{n,I}, f \in
   \Theta_I \}} \prod_{i=1}^n \frac{f(Y_i)}{f_0(Y_i)} \geq \exp( -c_1
  n \delta_{n,I}^2) \Big) \leq 4 \exp(-c_2 n \delta_{n,I}^2).
  \end{equation*}
\end{corollary}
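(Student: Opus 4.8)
The plan is to apply Theorem~\ref{thm:likelihoodratio} with $\mathcal{P}_n = \Theta_I$ and $\epsilon = \delta_{n,I}$, so the entire task reduces to verifying that the entropy integral condition \eqref{eq:entropycondition} holds for this choice. First I would invoke Lemma~\ref{lemma:dballupperbound}: since $\{f \in \Theta_I : \rho(f,f_0) \geq \delta_{n,I}\} \subset \Theta_I$ and, more usefully, the supremum over $\rho(f,f_0) \geq \delta_{n,I}$ can be handled by covering the full sieve, I need a bound on $H(u/a, \Theta_I, \rho)$. Using the $d$-ball entropy bound with, say, $d$ a fixed constant (or by noting $\Theta_I = \Theta_I^d$ for $d$ equal to the diameter, which is bounded since all densities integrate to one), Lemma~\ref{lemma:dballupperbound} gives
\begin{equation*}
H(u/a, \Theta_I, \rho) \leq I\log p + (I+1)\log(I+1) + \tfrac{I}{2}\log I + I\log\frac{ad}{u} + b,
\end{equation*}
which for $n$ and $I$ large is dominated by a term of order $I\log I + I\log(1/u)$, i.e. $H(u/a,\Theta_I,\rho) \leq C_1 I \log I + I \log(1/u)$ for an absolute constant $C_1$ (absorbing the $\log p$ and additive constants into the $I\log I$ term since $I \to \infty$).

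Next I would bound the integral $\int_{\delta_{n,I}^2/2^8}^{\sqrt{2}\,\delta_{n,I}} H^{1/2}(u/a, \Theta_I, \rho)\, du$. Using $\sqrt{x+y} \leq \sqrt{x} + \sqrt{y}$, split it into $\int \sqrt{C_1 I \log I}\, du + \int \sqrt{I \log(1/u)}\, du$ over the interval of length at most $\sqrt{2}\,\delta_{n,I}$. The first piece is at most $\sqrt{2}\,\delta_{n,I}\sqrt{C_1 I \log I}$. The second piece requires a standard estimate: $\int_0^{\sqrt{2}\delta} \sqrt{\log(1/u)}\, du = O(\delta \sqrt{\log(1/\delta)})$, and since $\delta_{n,I}$ is polynomially small in $n$ (as $I$ is polynomial in $n/\log n$), $\sqrt{\log(1/\delta_{n,I})} = O(\sqrt{\log n})$, so this piece is $O(\delta_{n,I}\sqrt{I \log n})$. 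Combining, the whole integral is bounded by a constant times $\delta_{n,I}\sqrt{I \log I} + \delta_{n,I}\sqrt{I\log n} \leq C_2\, \delta_{n,I}\sqrt{I \log n}$ for $n,I$ large (since $\log I \leq \log n$).

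Finally I would check this is at most $c\, n^{1/2}\delta_{n,I}^2$. Substituting the definition $\delta_{n,I}^2 = \frac{I\log I}{n/\log n} = \frac{I \log I \log n}{n}$, we have $n^{1/2}\delta_{n,I}^2 = \frac{I \log I \log n}{n^{1/2}}$, while the integral bound is $C_2 \delta_{n,I}\sqrt{I\log n} = C_2 \sqrt{\frac{I \log I \log n}{n}}\cdot\sqrt{I \log n} = C_2 \frac{\sqrt{I\log I}\, I^{1/2}\log n}{n^{1/2}} \cdot (\log I / \log n)^{0}$... more carefully, $= C_2 \frac{I (\log I)^{1/2}(\log n)^{1/2}\cdot(\log n)^{1/2}}{n^{1/2}} = C_2 \frac{I (\log I)^{1/2}\log n}{n^{1/2}}$, and comparing with $n^{1/2}\delta_{n,I}^2 = \frac{I\log I\log n}{n^{1/2}}$, the ratio is $C_2 (\log I)^{-1/2} \to 0$. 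Hence for $n$ and $I$ sufficiently large, \eqref{eq:entropycondition} holds, and Theorem~\ref{thm:likelihoodratio} yields the claimed inequality with the same constants $c_1, c_2 \in (0,1)$. The main obstacle, though routine, is being careful that the entropy bound from Lemma~\ref{lemma:dballupperbound} is applied with a legitimate choice of $d$ (one must either take $d$ bounded by the diameter of $\Theta_I$ under Hellinger distance, which needs a brief remark that densities in $\Theta_I$ have uniformly bounded $\rho$-distance from $f_0$ only if one restricts attention appropriately — or instead directly cover $\Theta_I$ itself by a variant of Lemma~\ref{lemma:boundofH}'s argument), and that the suppressed constants do not secretly depend on $I$.
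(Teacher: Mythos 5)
Your proposal is correct and follows essentially the same route as the paper: apply the Wong--Shen inequality (Theorem \ref{thm:likelihoodratio}) with $\mathcal{P}_n=\Theta_I$, $\epsilon=\delta_{n,I}$, and verify the entropy integral condition via Lemma \ref{lemma:dballupperbound} (with $d$ an absolute constant, legitimate since the Hellinger diameter is at most $\sqrt{2}$), concluding that the integral is of order $\delta_{n,I}\sqrt{I\log n}$, which is $o(n^{1/2}\delta_{n,I}^2)$. The only cosmetic difference is that the paper evaluates the integral exactly via the error function while you use the elementary bound $\int_0^{\delta}\sqrt{\log(1/u)}\,du=O(\delta\sqrt{\log(1/\delta)})$; both yield the same order.
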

\begin{proof}
  The key to the proof is to check condition (\ref{eq:entropycondition}) so
  that we can apply Theorem \ref{thm:likelihoodratio} to the sequence
  of approximating spaces $\Theta_I$. 
By Lemma \ref{lemma:dballupperbound},
\begin{eqnarray}
\label{eq:entropybound}
\nonumber  & &\int_{\delta_{n,I}^2/2^8}^{\sqrt{2} \delta_{n,I}} H^{1/2} ( u/a,
  \Theta_I, \rho) du \\
\nonumber &\leq& \int_{\delta_{n,I}^2/2^8}^{\sqrt{2} \delta_{n,I}}
\left( I \log (4pa) + 2 (I+1) \log(I+1) -I \log u \right)^{1/2} du \\
&\sim& I^{1/2} \int_{\delta_{n,I}^2/2^8}^{\sqrt{2} \delta_{n,I}} \left( \log\frac{I^2}{u}
\right)^{1/2} du.
\end{eqnarray}
We calculate the integral and obtain
\begin{eqnarray*}
& & \eqref{eq:entropybound}\\
&\leq& I^{1/2} \left( u \sqrt{ \log \frac{I^2}{u} } - \frac{\sqrt{\pi}}{2}
I^2 \mbox{erf} \left( \sqrt{ \log \frac{I^2}{u} } \right) \right)|_{\delta_{n,I}^2/2^8}^{\sqrt{2} \delta_{n,I}} \\
&\leq& I^{1/2} ( \sqrt{2} \delta_{n,I} \sqrt{ \log \frac{I^2}{
    \sqrt{2} \delta_{n,I}} } - \frac{\sqrt{\pi}}{2}
I^2 \mbox{erf} \left( \sqrt{ \log \frac{I^2}{ \sqrt{2}\delta_{n,I} } } \right) -\frac{\delta_{n,I}^2}{2^8} \sqrt{ \log
  \frac{2^8 I^2}{ \delta_{n,I}^2} }\\
& & + \frac{\sqrt{\pi}}{2}
I^2 \mbox{erf} \left( \sqrt{ \log \frac{2^8 I^2}{\delta_{n,I}^2} } \right) ) \\
&\sim& I^{1/2} \left(  \delta_{n,I} \sqrt{ \log \frac{I^2}{\delta_{n,I}} } + \frac{\sqrt{\pi}}{2}
I^2  \left(\mbox{erf} ( \sqrt{ \log \frac{2^8 I^2}{\delta_{n,I}^2} }
 ) -\mbox{erf} (
\sqrt{ \log \frac{I^2}{\sqrt{2} \delta_{n,I}} }) \right) \right) \\
&\sim& I^{1/2} \left(  \delta_{n,I} \sqrt{ \log \frac{I^2}{\delta_{n,I}} } + \frac{\sqrt{\pi}}{2}
I^2 \left( 1 - \frac{ \frac{ \delta_{n,I}^2 }{ 2^8 I^2}} {\sqrt{ \pi \log
    \frac{2^8 I^2}{\delta_{n,I}^2}}} -1 + \frac{ \frac{\sqrt{2}
    \delta_{n,I} }{I^2} } {
  \sqrt{\pi \log \frac{I^2}{\sqrt{2} \delta_{n,I} }} } \right) \right) \\
&\sim& I^{1/2} \delta_{n,I} \sqrt{ \log \frac{I^2}{\delta_{n,I}} } \\
&\leq & c \sqrt{n} \delta_{n,I}^2.
\end{eqnarray*}
Therefore, condition (\ref{eq:entropycondition}) is satisfied. The
desired result follows from Theorem \ref{thm:likelihoodratio}.
\end{proof}
\subsection{An inequality for the Kullback-Leibler divergence}
\label{subsec:KLbound}

It is well known that Hellinger distance can be bounded by Kullback-Leibler divergence. In \cite{wong1995}, the authors showed that
the other direction also holds under mild conditions. This type of
result becomes quite useful in this paper because it would allow us to
study the convergence rate under the weaker Hellinger distance. We
first cite the result from their paper. It enables us to obtain a more
explicite bound for density functions in $\mathcal{F}_0$ in the later part of this section. 

\begin{lemma} [Wong and Shen (1995) Theorem 5]
  \label{lemma:hellingertokl}
  Let $f$, $f_0$ be two densities, $\rho^2(f_0, f) \leq
  \epsilon^2$. Suppose that $M_{\lambda} ^2 = \int_{\{f_0/f \geq
    e^{1/\lambda} \} } f_0 (f_0 /f)^{\lambda} < \infty$ for some $\lambda
  \in (0, 1]$. Then for all $\epsilon^2 \leq \frac{1}{2} (
  1-e^{-1})^2$, we have 
  \begin{eqnarray*}
    \int f_0 \log (\frac{f_0}{f}) \leq \big[ 6 + \frac{2 \log
      2}{(1-e^{-1})^2} + \frac{8}{\lambda} \max \big( 1 , \log
    (\frac{M_{\lambda}}{\epsilon}) \big) \big] \epsilon^2,\\
\int f_0 \big( \log (\frac{f_0}{f}) \big)^2 \leq 5\epsilon^2 \big[
\frac{1}{\lambda} \max \big( 1, \log (\frac{M_{\lambda}}{\epsilon} )
\big) \big]^2.
  \end{eqnarray*}
\end{lemma}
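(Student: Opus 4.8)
The plan is to split the sample space along the level set $B:=\{f_0/f\ge e^{1/\lambda}\}=\{h\le e^{-1/(2\lambda)}\}$, where $h:=\sqrt{f/f_0}$, handling the ``bulk'' $B^c$ by elementary pointwise inequalities plus the Hellinger constraint, and handling the ``tail'' $B$ by the moment $M_\lambda$ through Jensen's inequality. Write $\ell:=\log(f_0/f)$; since $M_\lambda<\infty$ forces $P_{f_0}(f=0)=0$, both $\ell$ and $h$ are finite $f_0$-a.e., and $\int f_0h=\int\sqrt{f_0f}=1-\tfrac12\rho^2(f_0,f)$, so in particular $\int f_0(1-h)=\tfrac12\rho^2$ and $\int f_0(h-1)^2=\rho^2$. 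Two ingredients are prepared first. (i) Pointwise inequalities: $-\log t\le 2(t^{-1/2}-1)$ for all $t>0$ (the difference of the two sides is monotone on each side of $t=1$); $\log(1+u)\ge u-\tfrac12u^2$ for $u\ge0$; and, on $\{h\ge e^{-1/(2\lambda)}\}$, both $-\log h\le\kappa(1-h)$ (for $h\le1$) and $-\log h-(1-h)\le\kappa(1-h)^2$, with a constant $\kappa=\kappa(\lambda)$ of order $1/\lambda$. (ii) A Hellinger-to-probability bound: for any $L\ge1$, $P_{f_0}(f_0/f\ge L)=P_{f_0}(h\le L^{-1/2})\le\rho^2/(1-L^{-1/2})^2\le\epsilon^2/(1-L^{-1/2})^2$; in particular $q:=P_{f_0}(B)\le\epsilon^2/(1-e^{-1/(2\lambda)})^2$.

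For the first inequality I would write $\int f_0\ell=\int_{\{h\le1\}}f_0(-2\log h)+\int_{\{h>1\}}f_0(-2\log h)$. On $\{h>1\}$ I bound $-2\log(1+u)\le-2u+u^2$, and on $\{h\le1\}$ I write $-2\log h=2(1-h)+2g(h)$ with $g(h):=-\log h-(1-h)\ge0$; using $\int_{\{h\le1\}}f_0(1-h)=\tfrac12\rho^2+\int_{\{h>1\}}f_0(h-1)$, the two linear (total-variation order) terms $\pm2\int_{\{h>1\}}f_0(h-1)$ cancel, leaving $\int f_0\ell\le2\rho^2+2\int_{\{h\le1\}}f_0g(h)$. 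Splitting $\{h\le1\}=\{e^{-1/(2\lambda)}\le h\le1\}\cup B$ and using $g(h)\le\kappa(1-h)^2$ on the first piece and $g(h)\le-\log h=\tfrac12\ell$ on $B$, this becomes $\int f_0\ell\le(2+2\kappa)\rho^2+\int_Bf_0\ell$. On $B$ I write $\ell=\tfrac1\lambda\log\big((f_0/f)^\lambda\big)$ with $(f_0/f)^\lambda\ge e$ and apply Jensen's inequality to the concave function $\log$ against the normalised measure $f_0\mathbbm{1}_B/q$:
\begin{equation*}
\int_B f_0\,\ell \;=\; \frac{q}{\lambda}\cdot\frac1q\int_B f_0\log\big((f_0/f)^\lambda\big) \;\le\; \frac{q}{\lambda}\,\log\Big(\frac1q\int_B f_0(f_0/f)^\lambda\Big) \;=\; \frac{q}{\lambda}\log\frac{M_\lambda^2}{q}.
\end{equation*}
Since $x\mapsto x\log(M_\lambda^2/x)$ is increasing for $x<M_\lambda^2/e$, inserting the bound on $q$ and using $1-e^{-1/(2\lambda)}<1$ gives $\int_Bf_0\ell\le\tfrac8\lambda\epsilon^2\max(1,\log(M_\lambda/\epsilon))$ after bookkeeping, the ``$\max$ with $1$'' covering the small range of $\epsilon$ where the monotonicity fails; here the hypothesis $\epsilon^2\le\tfrac12(1-e^{-1})^2$ is what keeps all residual constants at the stated size, including absorbing the $(2+2\kappa)\rho^2$ term (of order $\epsilon^2/\lambda$) into the $\tfrac8\lambda$-coefficient.

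For the second inequality I would split $\int f_0\ell^2$ into $\{h\ge1\}$, $\{e^{-1/(2\lambda)}\le h\le1\}$ and $B$. On $\{h\ge1\}$, $\ell=-2\log h\in[2(1-h),0]$, so $\ell^2\le4(h-1)^2$ and the integral is $\le4\rho^2$. On $\{e^{-1/(2\lambda)}\le h\le1\}$, the bound $-\log h\le\kappa(1-h)$ gives $\ell^2\le4\kappa^2(1-h)^2$, so the integral is $\le4\kappa^2\rho^2$, of order $\rho^2/\lambda^2$. On $B$ the key point is that $y\mapsto(\log y)^2$ is concave on $[e,\infty)$; since $(f_0/f)^\lambda\ge e$ there (hence so is its $f_0\mathbbm{1}_B/q$-average), the same Jensen step applied to $(\log y)^2$ gives $\int_Bf_0\ell^2\le\tfrac{q}{\lambda^2}\big(\log(M_\lambda^2/q)\big)^2$, and monotonicity of $x\mapsto x(\log(M_\lambda^2/x))^2$ for small $x$ together with the bound on $q$ turns this into a term $\le\tfrac{C}{\lambda^2}\epsilon^2\max(1,\log(M_\lambda/\epsilon))^2$. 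Collecting the three regions and again invoking $\epsilon^2\le\tfrac12(1-e^{-1})^2$ to absorb constants yields the bound $5\epsilon^2\big[\tfrac1\lambda\max(1,\log(M_\lambda/\epsilon))\big]^2$.

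The hard part is the tail $B$: because $\ell$ is unbounded there, no finite estimate is possible without the moment hypothesis, and the crux is to convert ``$M_\lambda<\infty$ together with $P_{f_0}(B)=O(\epsilon^2)$'' into a bound of order $\epsilon^2\log(M_\lambda/\epsilon)$ --- exactly what Jensen's inequality against $\log$ (respectively $(\log)^2$, using its concavity on $[e,\infty)$) delivers, and also the place where the smallness of $\epsilon$ enters, through the monotonicity of $x\mapsto x\log(c/x)$. A secondary, more tedious point is the cancellation of the total-variation-order term on $\{h>1\}$ against the one on $\{h\le1\}$ in the first inequality; without it one would only obtain a rate of order $\epsilon$ rather than $\epsilon^2$.
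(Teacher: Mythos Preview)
The paper does not give its own proof of this lemma: it is quoted as Theorem~5 of Wong and Shen (1995) and used as a black box in the subsequent Lemma~\ref{lemma:KLbound}. So there is nothing in the paper to compare against directly; I will comment on the soundness of your argument on its own.

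Your strategy is the right one and is essentially the Wong--Shen argument: split at the level set $B=\{f_0/f\ge e^{1/\lambda}\}$; on $B^c$ control $\ell=-2\log h$ through second-order pointwise inequalities combined with the Hellinger identities $\int f_0(1-h)=\tfrac12\rho^2$ and $\int f_0(1-h)^2=\rho^2$; bound $q=P_{f_0}(B)$ by the Chebyshev-type estimate $P_{f_0}(h\le c)\le\rho^2/(1-c)^2$; and on $B$ write $\ell=\lambda^{-1}\log\big((f_0/f)^\lambda\big)$ and apply Jensen's inequality for $\log$ (respectively for $(\log)^2$, concave on $[e,\infty)$). The cancellation you single out---the linear $(h-1)$ contribution on $\{h>1\}$ against that on $\{h\le1\}$---is exactly what upgrades the rate from $\epsilon$ to $\epsilon^2$, and your identification of the Jensen step as the crux is correct.

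Two points need tightening before the stated constants emerge. First, your claim that on $\{e^{-1/(2\lambda)}\le h\le 1\}$ one has $-\log h-(1-h)\le\kappa(1-h)^2$ with $\kappa=O(1/\lambda)$ is true, but not obvious: a naive Taylor remainder gives $\tfrac12 h^{-2}$, which is exponentially large in $1/\lambda$. One must actually check that $g(h)/(1-h)^2$ is monotone and evaluate it at the left endpoint, where it equals $\big(\tfrac{1}{2\lambda}-1+e^{-1/(2\lambda)}\big)/(1-e^{-1/(2\lambda)})^2\sim\tfrac{1}{2\lambda}$; this should be stated, not asserted. Second, with your split the Jensen term yields $\tfrac{q}{\lambda}\log(M_\lambda^2/q)$ with $q\le\epsilon^2/(1-e^{-1/(2\lambda)})^2$, and for $\lambda$ near $1$ this produces a prefactor of roughly $2/(1-e^{-1/2})^2\approx 13$ in front of $\tfrac{1}{\lambda}\epsilon^2\log(M_\lambda/\epsilon)$, larger than the stated $8/\lambda$. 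The appearance of $(1-e^{-1})^{-2}$ in the target constant signals that the sharp version introduces a $\lambda$-independent intermediate threshold (at $f_0/f=e^2$, i.e.\ $h=e^{-1}$) before passing to $B$; your phrase ``after bookkeeping'' hides a genuine, if routine, reorganisation. None of this affects the correctness of the method or the order $\epsilon^2\log(1/\epsilon)$ of the bound---only the numerical constants.
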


When $f_0$ is the true density function and $f$ is an approximation to
$f_0$ in $\Theta_I$, we can obtain a more explicit bound of the
Kullback-Leibler divergence. The result is summarized in the lemma
below.

\begin{lemma}
\label{lemma:KLbound}
  $f_0$ is a density function on $\Omega$. If $f_0 \in \mathcal{F}_0$, then we can find
  $g \in \Theta_I$, such that 
  \begin{eqnarray*}
   \int f_0 \log (f_0/g)  \leq 128 A^2 r I^{-2r} \log I,\\
\int f_0 (\log ( f_0/g) )^2 \leq 320 A^2 r^2 I^{-2r} (\log I)^2.
  \end{eqnarray*} 
\end{lemma}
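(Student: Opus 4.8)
The plan is to produce the required $g$ by regularizing the approximant supplied by the definition of $\mathcal{F}_0$, and then to invoke Lemma \ref{lemma:hellingertokl}. The difficulty is that the best $\Theta_I$-approximant $f_I$ to $f_0$ in Hellinger distance may vanish on part of $\Omega$, so that the quantity $M_\lambda$ appearing in Lemma \ref{lemma:hellingertokl} is infinite or uncontrolled; the remedy is to mix $f_I$ with the uniform density, which on $\Omega$ (where $\mu(\Omega)=1$) is just the constant $1$ and already lies in every $\Theta_I$.

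Concretely, I would first fix $f_I=\sum_{i=1}^I\beta_i\mathbbm{1}_{\Omega_i}\in\Theta_I$ with $\rho(f_0,f_I)\le AI^{-r}$, set $\eta=\tfrac14 A^2 I^{-2r}$, and take $g=(1-\eta)f_I+\eta=\sum_{i=1}^I\big((1-\eta)\beta_i+\eta\big)\mathbbm{1}_{\Omega_i}$. Then $g$ is again a density in $\Theta_I$ supported by the same partition, and $g\ge\eta$ everywhere. To control $\rho(f_0,g)$ I would use $(\sqrt a-\sqrt b)^2\le a-b$ for $a\ge b\ge 0$: pointwise $(\sqrt g-\sqrt{(1-\eta)f_I})^2\le g-(1-\eta)f_I=\eta$, so integrating over $\Omega$ and combining with $1-\sqrt{1-\eta}\le\eta$ and the triangle inequality in $L_2$ gives $\rho(f_I,g)\le\sqrt\eta+\eta$, which for large $I$ is at most $AI^{-r}$. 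Hence $\rho(f_0,g)\le 2AI^{-r}=:\epsilon$, and $\epsilon^2\le\tfrac12(1-e^{-1})^2$ once $I$ is large enough.

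The heart of the argument is the $M_\lambda$ estimate. Taking $\lambda=1$ and using $g\ge\eta$ together with the standing assumption $\int_\Omega f_0^2<\infty$ built into $\mathcal{F}_0$, one has $M_1^2=\int_{\{f_0/g\ge e\}}f_0^2/g\le\eta^{-1}\int_\Omega f_0^2<\infty$, so $M_1\le\eta^{-1/2}\|f_0\|_2=2A^{-1}I^r\|f_0\|_2$ and therefore $M_1/\epsilon\le A^{-2}\|f_0\|_2\,I^{2r}$; thus $\max(1,\log(M_1/\epsilon))\le 2r\log I+C$ for a constant $C=C(f_0,A)$. Feeding $\epsilon$, $\lambda=1$ and this bound into Lemma \ref{lemma:hellingertokl}: for $I$ large the bracket $6+2\log 2/(1-e^{-1})^2+8(2r\log I+C)$ in the first inequality is at most $32r\log I$, so $\int f_0\log(f_0/g)\le 32r\log I\cdot 4A^2 I^{-2r}=128A^2 r I^{-2r}\log I$; similarly $(2r\log I+C)^2\le(4r\log I)^2$ for $I$ large, so $\int f_0(\log(f_0/g))^2\le 5\cdot 4A^2 I^{-2r}(4r\log I)^2=320A^2 r^2 I^{-2r}(\log I)^2$.

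The one delicate point — and the main obstacle — is the joint choice in the middle paragraph: $\eta$ must be small enough that $g$ stays within order $AI^{-r}$ of $f_0$ in Hellinger distance, yet of the same order $A^2 I^{-2r}$ so that the floor $g\ge\eta$ keeps $M_1$ only polynomially large in $I$, whence $\log(M_1/\epsilon)$ costs merely an extra $\log I$ factor. Everything after that is bookkeeping of the explicit constants produced by Lemma \ref{lemma:hellingertokl}, using that for $I$ large the $\log I$ term dominates the additive constants.
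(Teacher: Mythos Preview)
Your proof is correct and lands on the same constants, but by a genuinely different route from the paper. The paper does not mix with the uniform; instead it replaces $f_I$ by the density built from the $L_2$-projection of $\sqrt{f_0}$ onto piecewise constants, namely $h=\sum_i\bigl(\int_{\Omega_i}\sqrt{f_0}/\mu(\Omega_i)\bigr)^2\mathbbm{1}_{\Omega_i}$ and $g=h/\int h$, and then applies Lemma~\ref{lemma:hellingertokl} with $\lambda=1/4$. The point of that construction is algebraic: on $\Omega_i$ one has $(f_0/g)^{1/4}$ proportional to $f_0^{1/4}\bigl(\int_{\Omega_i}\sqrt{f_0}\bigr)^{-1/2}\mu(\Omega_i)^{1/2}$, and Cauchy--Schwarz on $\int_{\Omega_i}f_0^{5/4}=\int_{\Omega_i}f_0^{1/4}\cdot f_0$ produces exactly a factor $\bigl(\int_{\Omega_i}\sqrt{f_0}\bigr)^{1/2}$ that cancels it; a second Cauchy--Schwarz over $i$ then yields $M_{1/4}^2\le(\int f_0^2)^{1/2}$, a bound \emph{independent of $I$}. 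Your approach instead floors $g$ at $\eta\asymp I^{-2r}$, so $M_1$ grows like $I^r$; this costs nothing after taking logarithms, and the mixing device is more elementary and more portable (it works in any sieve containing the uniform, with no structural relation between $g$ and $f_0$ needed). The paper's projection trick is more specialized but delivers the cleaner intermediate estimate and avoids the calibration of $\eta$ against the Hellinger error that you correctly flag as the delicate step.
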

\begin{proof}
  Assume that $f= \sum_{i=1}^I \beta_i \mathbbm{1}_{\Omega_i}$, where
  $\{ \Omega_i \}_{i=1}^I$ is a binary partition of $\Omega$. From the
  property of $L_2$-projection, we have
  \begin{eqnarray*}
    \rho^2(f_0, f) &=& \sum_{i=1}^I \int_{\Omega_i} ( \sqrt{f_0(x)} -
    \sqrt{\beta_i})^2 \mu(dx) \\
&\geq & \sum_{i=1}^I \int_{\Omega_i} \left( \sqrt{f_0} -
\frac{\int_{\Omega_i} \sqrt{f_0} } { \mu (\Omega_i)} \right)^2.
  \end{eqnarray*}
Let $h=\sum_{i=1}^I \left( \frac{\int_{\Omega_i}
    \sqrt{f_0}}{\mu(\Omega_i)} \right)^2 \mathbbm{1}_{\Omega_i}$, then
\begin{equation*}
  \int_{\Omega} h = \sum_{i=1}^I \frac{ (\int_{\Omega_i} \sqrt{f_0} )^2
} {\mu(\Omega_i)} \leq \sum_{i=1}^I \frac{( \int_{\Omega_i} f_0
  )\mu(\Omega_i)}{\mu(\Omega_i)} =1.
\end{equation*}

Let $g= \frac{h} {\int_{\Omega} h}$, then $g$ is a density function,
and 
\begin{eqnarray*}
  \rho^2 (f_0, g) &=& \| \sqrt{f_0} -\sqrt{h} + \sqrt{h} -\frac{
    \sqrt{h}}{ \| \sqrt{h} \|_2 } \|_2^2\\
&\leq& 2 \| \sqrt{f_0} -\sqrt{h} \|_2^2 + 2 (1-\frac{1}{\| \sqrt{h}
  \|_2} )^2 \| \sqrt{h} \|_2^2 \\
&\leq & 2\rho^2(f_0, f) + 2(1- \| \sqrt{h} \|_2^2) \\
&=& 2\rho^2(f_0,f) + 2 \| \sqrt{f_0} -\sqrt{h} \|_2^2 \\
&\leq& 4 \rho^2(f_0,f).
\end{eqnarray*}
For density functions $f_0$ and $g$,
\begin{eqnarray*}
  M_{1/4}^2 &=& \sum_{i=1}^I \int_{\Omega_i \cap \{ f_0 / g >
    e^4 \} } f_0 \left( \frac{f_0}{ \left( \frac{\int_{\Omega_i}
        \sqrt{f_0} } { \| \sqrt{h} \|_2 \mu(\Omega_i)} \right) ^2}
\right)^{1/4} \\
&\leq& \sum_{i=1}^I \frac{ \int_{\Omega_i} f_0^{1+1/4}} { \left(
    \frac{\int_{\Omega_i} \sqrt{f_0} }{\mu(\Omega_i)} \right)^{1/2}}
\\
&\leq& \sum_{i=1}^I \frac{ (\int_{\Omega_i} f_0 ^{1/2})^{1/2}
  (\int_{\Omega_i} f_0 ^2 )^{1/2} }{ \left(
    \frac{\int_{\Omega_i} \sqrt{f_0} }{\mu(\Omega_i)} \right)^{1/2}}= \sum_{i=1}^I (\int_{\Omega_i} f_0^2 ) ^{1/2} (\mu(\Omega_i))^{1/2}
\\
&\leq& \left(\int_{\Omega} f_0 ^2 \right) ^{1/2}.
\end{eqnarray*}
Therefore, if we set $\lambda=1/4$, when $I$ is large enough
\begin{eqnarray*}
  \int f_0 \log (f_0/g) &\leq& [6 + \frac{2 \log2}{(1-e^{-1})^2}
  +32 \max (1, \log \frac{ (\int f_0^2)^{1/4}}{ 2 A I^{-r} }) ] \cdot
  4 A^2 I^{-2r} \\
&\leq& 128 A^2 r I^{-2r} \log I.
\end{eqnarray*}
Similarly, 
\begin{equation*}
  \int f_0 (\log ( f_0/g) )^2 \leq 320 A^2 r^2 I^{-2r} (\log I)^2.
\end{equation*}
\end{proof}

\subsection{Convergence rate}
\label{subsec:rate}
In this section, we apply the above large-deviation type inequality and the bound of the
Kullback-Leibler divergence to derive convergence rate of the sieve MLE for $f_0 \in \mathcal{F}_0$.

\begin{proof}[proof of Theorem \ref{th:convergencerate}]
For any $g \in \Theta_I$ and $D>1$, we have 
\begin{equation*}
  \mathbb{P} \left( \rho(f_0, \hat{f}_{n,I} ) \geq D \delta_{n,I} \right) \leq
  \mathbb{P}_{f_0} \left( \sup_{ \{ \rho(f_0, f) \geq D \delta_{n,I}, f \in
    \Theta_I \}} \prod_{j=1}^n f(Y_j)/g(Y_j) \geq 1 \right),
\end{equation*}
where $\mathbb{P}_{f_0}$ is understood to be the outer probability measure under
$f_0$. 

Let $C= \{ f \in \Theta_I: \rho(f_0, f) \geq D \delta_{n,I} \}$. Then for
$g \in \Theta_I$,
\begin{equation*}
  \mathbb{P}_{f_0} \left( \sup_{f \in C} \prod_{j=1}^n
  \frac{f(Y_j)}{g(Y_j)} \geq 1 \right) \leq P_1 +P_2,
\end{equation*}
where
\begin{equation*}
  P_1 = \mathbb{P}_{f_0} \left( \sup_{f \in C} \prod_{j=1}^n
  \frac{f(Y_j)}{f_0(Y_j)} \geq \exp\left(-c_1 n (D \delta_{n,I})^2 \right) \right),
\end{equation*}
\begin{equation*}
  P_2 = \mathbb{P} \left( \prod_{j=1}^n \frac{f_0(Y_j)}{g(Y_j)} \geq
    \exp \left(  c_1 n ( D \delta_{n,I})^2 \right) \right).
\end{equation*}
In regards of $P_1$, Corollary \ref{cor:largedeviation} still applies here if we replace
$\delta_{n,I}$ by $D \delta_{n,I}$. Thus, $P_1 \leq 4 \exp( - c_2 n
D^2 \delta^2_{n,I})$. 

To bound $P_2$, we write
\begin{eqnarray*}
  P_2 &=& \mathbb{P} \left( \sum_{j=1}^n \log \left( \frac{f_0}{g}
    \right) (Y_j) \geq c_1 n (D \delta_{n,I})^2 \right) \\
&=& \mathbb{P} \left( \sum_{j=1}^n \left[ \log \left(\frac{f_0}{g}
    \right) (Y_j) - \mathbb{E} \log \left(\frac{f_0}{g}
    \right) \right] \geq c_1 n (D \delta_{n,I} )^2 - n \int f_0
  \log \left( \frac{f_0}{g} \right) \right).
\end{eqnarray*}
If $\int f_0 \log (f_0/g) < c_1 D^2 \delta^2_{n,I}$, then
\begin{equation*}
  P_2 \leq \frac { n \int f_0 \log \left(\frac{f_0}{g} \right)^2 }{
    n^2 \left( c_1 D^2 \delta^2_{n,I} - \int f_0 \log
      \left(\frac{f_0}{g} \right) \right)^2 }.
\end{equation*}
Based on our assumption, there exists $f_I \in \Theta_I$, such that
$\rho(f_0, f_I) \leq A I^{-r}$. Then from Lemma \ref{lemma:KLbound},
we know that
\begin{eqnarray*}
  \int f_0 \log (f_0/f_I)  \leq 128 A^2 r I^{-2r} \log I,\\
\int f_0 (\log ( f_0/f_I) )^2 \leq 320 A^2 r^2 I^{-2r} (\log I)^2.
\end{eqnarray*}
Therefore,
\begin{equation*}
  \inf_{g \in \Theta_I} P_2 \leq \frac{320 A^2 r^2 I^{-2r} (\log I)^2}
  { n \left( c_1 D^2 \delta^2_{n,I} - 128 A^2 r I^{-2r} \log I \right)^2}.
\end{equation*}

If we take $I =\left( (2^8 A^2 r /c_1) \frac{n}{\log n} \right)
^{\frac{1}{2r+1} }$, then the condition $\int f_0 \log (f_0/f_I) < c_1
D^2 \delta^2_{n,I}$ is satisfied. If $n$ and $I$ are matched in this
way, the order of $\delta_{n,I}$
determines the final convergence rate, which is $n^{-\frac{r}{2r+1}} (\log n)
^{(\frac{1}{2}+\frac{r}{2r+1})}$. This finishes the proof.  
\end{proof}

\section{Application to spatial adaptation}
\label{sec:spatialadaptation}
In this section, we assume that the density concentrates
spatially. Mathematically, this implies the density function
satisfies a type of \emph{spatial sparsity}. In the past two decades,
sparsity has become one of the most discussed types of structure under
which we are able to overcome the curse of
dimensionality. A remarkable example is that it allows us to solve high-dimensional linear
models, especially when the system is underdetermined. It would be
interesting to study how we could benefit from the sparse structure
when performing density estimation. This section is devoted to
this purpose. 
Under current settings, it is natural to characterize the spatial sparsity by controlling the decay
rate of the ordered Haar wavelet coefficients. After introducing the
high-dimensional Haar basis in Section \ref{subsec:Haarbasis1}, we
provide a rigorous characterization of the sparsity condition in Section
\ref{subsec:sparsity}, and illustrate this characterization by several examples. In
Section \ref{subsec:spatialadaptationrate}, we demonstrate that the
sparse structure allows fast convergence by calculating the explicit convergence
rate. 

\subsection{High-dimensional Haar basis}
\label{subsec:Haarbasis1}
Haar basis is the simplest but widely used wavelet basis. In one
dimension, the Haar wavelet's mother wavelet function is
\begin{equation*}
  \psi(y) = \begin{cases} 1  &\mbox{if } 0\leq y < 1/2, \\
                                       -1 & \mbox{if } 1/2 \leq y < 1,
                                       \\
                                       0 & \mbox{otherwise.} \end{cases}
\end{equation*}
And its scaling function is
\begin{equation*}
  \phi(y) = \begin{cases} 1 &\mbox{if } 0 \leq y < 1, \\
                                       0 &\mbox{otherwise.} \end{cases}
\end{equation*}
Here, we take the two-dimensional case to
illustrate how the system is built. This construction
can be extended to high dimensional cases as well.

The two-dimensional scaling function is defined to be
\begin{equation*}
  \phi \phi(y^1,y^2) :=\phi(y^1) \phi(y^2),
\end{equation*}
and three wavelet functions are
\begin{equation*}
  \phi \psi(y^1,y^2) := \phi(y^1) \psi(y^2),
\end{equation*}
\begin{equation*}
  \psi \phi(y^1, y^2) := \psi(y^1) \phi(y^2),
\end{equation*}
\begin{equation*}
 \psi \psi(y^1, y^2) := \psi(y^1) \psi(y^2).
\end{equation*}
If we use a superscript $l$ to index the scaling level of the wavelet
function and subscripts $i$ and $j$ ($i$ and $j$ can be 0 or 1) to denote the horizontal and vertical
translations respectively, then the scales and translates of the three wavelet
functions $\phi \psi$, $\psi \phi$ and $\psi \psi$ are defined to be 
\begin{eqnarray*}
  &\phi \psi_{ij}^l (y^1, y^2) := (\sqrt{2} )^{2 \cdot l} \phi \psi(2^l y^1-i, 2^l y^2 -j),\\
 &\psi \phi_{ij}^l (y^1, y^2) :=  (\sqrt{2} )^{2 \cdot l} \psi \phi(2^l y^1 -i, 2^l y^2 -j),\\
& \psi \psi_{ij}^l (y^1, y^2) :=  (\sqrt{2} )^{2 \cdot l} \psi \psi (2^l y^1 -i, 2^l y^2 -j).
\end{eqnarray*}
These functions together with the single scaling
function $\phi \phi$ define the two-dimensional Haar wavelet basis
\textbf{$\Psi$}.

\subsection{Spatial sparsity}
\label{subsec:sparsity}
Let $f$ be a $p$ dimensional density function and \textbf{$\Psi$}
the $p$-dimensional Haar basis constructed as above. We will work with
$g=\sqrt{f}$ first. Note that $g \in L^2([0,1]^p)$. Thus we can expand $g$ with
respect to \textbf{$\Psi$} as $g = \sum_{\psi \in \Psi} <g,\psi> \psi$
(here $\psi$ is a basis function instead of the Haar
wavelet's mother wavelet function defined above). We
rearrange this summation by the size of wavelet coefficients. In other
words, we order the coefficients as the following
\begin{equation*}
 |<g, \psi_{(1)} >| \geq | <g, \psi_{(2)}>| \geq \cdots \geq |<g,
\psi_{(k)}>| \geq \cdots,
\end{equation*}
then the sparsity condition imposed on the density functions is
that the decay of the wavelet coefficients follows a power law,
\begin{equation}
  \label{eq:powlawdecay}
  |<g,\psi_{(k)}> | \leq C k^{-\beta} \mbox{ for all $k \in \mathbb{N}$
    and $\beta > 1/2$, }
\end{equation}
where $C$ is a constant. 

This condition has been widely used to characterize the sparsity of
signals and images (\cite{abramovich2006} and \cite{Candes}). In particular, in \cite{DeVore}, it was
shown that for
two-dimensional cases, when $\beta > 1/2$, this condition reasonably
captures the sparsity of real world images. 

Next we use several examples to illustrate how this condition
implies the spatial sparsity of the density function.
 
\begin{example}
Assume that the we are studying a density
function in three dimensions. All the mass concentrates in a dyadic cube. Without loss of
generality, we assume that $f_0 = 64 \mathbbm{1}_{ \{0\leq y^1, y^2, y^3 < 1/4
  \}}$, so that all the mass is concentrated in a small cubical subregion. In the three-dimensional case, the single scaling function is $\phi \phi
\phi$, and the seven wavelet functions are defined as
\begin{eqnarray*}
  &\chi_1 = \psi \phi \phi,\ \chi_2 =\phi \psi \phi,\ \chi_3 = \phi \phi
  \psi, \\
& \chi_4 = \psi \psi \phi,\ \chi_5 = \psi \phi \psi,\ \chi_6 = \phi \psi
\psi, \\
&\chi_7=\psi \psi \psi.
\end{eqnarray*}
If we still use the superscript to denote the scaling level and the subscripts
to denote the spatial translations, then the expansion of $f_0$ with
respect to the Haar basis is
\begin{equation*}
  f_0 = \phi \phi \phi + \sum_{k=1}^7
  \chi_{k,000}^{(0)} + 2 \sqrt{2} \sum_{k=1}^7 \chi_{k, 000}^{(1)}.
\end{equation*}
The coefficients display a decaying trend, although the number of them
is finite. More generally, any density function whose expansion only
has finite terms will satisfy the condition (\ref{eq:powlawdecay}).
\end{example}

\begin{example}
Assume that the two-dimensional true density function is
\begin{equation*}
  \begin{pmatrix} Y_1 \\ Y_2 \\ \end{pmatrix} \sim \frac{2}{5} \mathcal{N} \left( \begin{pmatrix} 0.25 \\
  0.25 \\ \end{pmatrix} , 0.05^2 I_{2 \times 2} \right) + \frac{3}{5} \mathcal{N} \left( \begin{pmatrix} 0.75 \\
  0.75 \\ \end{pmatrix} , 0.05^2 I_{2 \times 2} \right).
\end{equation*}
We perform the Haar transform. The heatmap of the
density function is displayed in Figure~\ref{fig:heatmap1} and the plot of the Haar coefficients is
shown in Figure~\ref{fig:Haar1}. The left panel in Figure~\ref{fig:Haar1} is the plot of all the coefficients to level ten from
low resolution to high resolution. The middle one is the sorted
coefficients according to their absolute value. And the right one is
the same as the middle plot but with the abcissa in log-scale. From this we can clearly see that the
power-law decay is satisfied, and an
empirical estimation of the corresponding $\beta$ can be obtained in this case. 

\begin{figure} 
\centering
\includegraphics[scale=0.4]{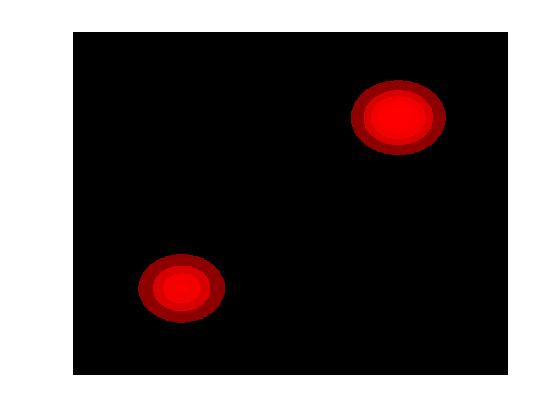}
\caption{Heatmap of the density}
\label{fig:heatmap1}
\end{figure}

\begin{figure} 
\centering
\includegraphics[scale=0.4]{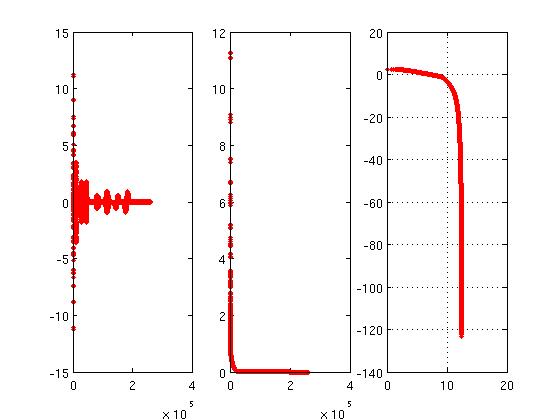}
\caption{Plots of the 2-dimensional Haar coefficients. The left panel
  is the plot of all the coefficients from low resolution to high
  resolution. The middle one is the plot of the sorted
  coefficients. And the right one is the same as the middle plot but
  with the abcissa in log scale.}
\label{fig:Haar1}
\end{figure}
\end{example}

\begin{example}
Let the three-dimensional density function be
\begin{equation*}
  \begin{pmatrix} Y_1 \\ Y_2 \\ Y_3 \\ \end{pmatrix} \sim \frac{2}{5}
  \mathcal{N} \left( \begin{pmatrix} 0.25 \\ 0.25 \\ 0.25
    \\ \end{pmatrix}, \begin{pmatrix} 0.05^2 & 0.03^2 & 0 \\ 0.03^2 &
    0.05^2 & 0 \\ 0 & 0 & 0.05^2 \\ \end{pmatrix} \right) + \frac{3}{5}
  \mathcal{N} \left( \begin{pmatrix} 0.75 \\ 0.75 \\ 0.75 \\ \end{pmatrix},
  0.05^2 I_{3 \times 3} \right).
\end{equation*}
In this example, we impose some correlation structure in one component. Haar transform is
performed and the behavior of the Haar coefficients is summarized in
Figure~\ref{fig:Haar2}. The arrangement of the plots is the same as
that in the previous example. For this three-dimensional example, the
power-law decay is still satisfied.
\begin{figure} 
\centering
\includegraphics[scale=0.4]{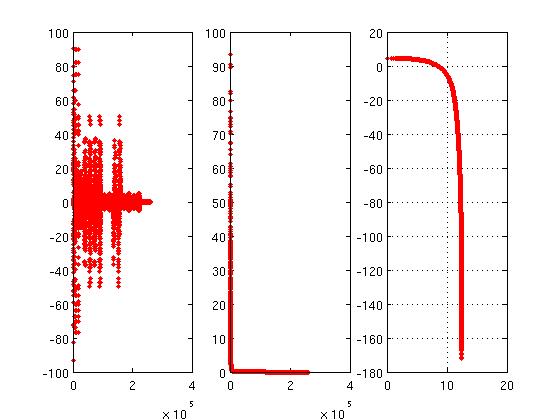}
\caption{Plots of the 3-dimensional Haar coefficients. The order of
  the plots is the same as that in Figure~\ref{fig:Haar1}.}
\label{fig:Haar2}
\end{figure}
\end{example}

\subsection{Convergence rate}
\label{subsec:spatialadaptationrate}
Assume that $f_0$ is the $p$-dimensional density function satisfy the
spatial sparsity condition (\ref{eq:powlawdecay}). Now we calculate
the convergence rate of the sieve MLE.
\begin{lemma}
\label{lemma:sparsedensity}
  Suppose $f_0$ is a $p$-dimensional density function. $g_0
  =\sqrt{f_0}$ satisfies the condition (\ref{eq:powlawdecay}). Then there
  exists a sequence of $f_I \in \Theta_I$, such that $\rho(f_0, f_I)
  \lesssim I^{-(\beta -1/2)}$, or equivalently, $\rho(f_0, f_I) \leq c
  I^{-(\beta- 1/2)}$, where $c$ may depend on $\beta$ and $p$ but not $I$. 
\end{lemma}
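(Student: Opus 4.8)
The plan is to build the approximating density $f_I$ directly from the Haar expansion of $g_0 = \sqrt{f_0}$, exploiting the fact that the binary partitions defined in Section \ref{subsec:sieve} are exactly dyadic partitions, so a Haar expansion truncated to the relevant dyadic resolution is piecewise constant on such a partition. Concretely, I would order the Haar coefficients as in \eqref{eq:powlawdecay} and, given a target number of terms $K$, keep the $K$ largest coefficients $\langle g_0,\psi_{(k)}\rangle$, $k=1,\dots,K$, together with the scaling coefficient. Call the resulting function $g_K$. Since each retained wavelet $\psi_{(k)}$ is constant on the cells of some dyadic partition, the set of all $K$ retained wavelets is jointly constant on a common dyadic refinement; the key combinatorial point is to bound the number $I$ of cells of this common refinement by something like $I \lesssim K$ (up to a constant depending on $p$), because each wavelet that is "turned on" subdivides at most a bounded number of existing cells, so that $g_K \in \Theta_I$ with $I = O(K)$ — this identification of "number of retained coefficients" with "size of the dyadic partition" is the heart of the argument.

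Next I would control the $L_2$ tail error. By Parseval, $\|g_0 - g_K\|_2^2 = \sum_{k>K} |\langle g_0,\psi_{(k)}\rangle|^2 \le C^2 \sum_{k>K} k^{-2\beta}$, and since $\beta > 1/2$ this sum is finite and of order $K^{-(2\beta-1)}$ by comparison with the integral $\int_K^\infty x^{-2\beta}\,dx$. Hence $\|g_0 - g_K\|_2 \lesssim K^{-(\beta - 1/2)}$. The remaining step is to pass from the $L_2$ error on the square roots to the Hellinger distance of the densities themselves. Here I cannot simply set $f_I = g_K^2$, because $g_K^2$ need not integrate to $1$ (and $g_K$ could in principle be slightly negative somewhere, though with a small modification one can keep it nonnegative, or one simply squares and renormalizes). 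So I would define $f_I = g_K^2 / \int g_K^2$, note that $\int g_K^2 = \|g_K\|_2^2 \to \|g_0\|_2^2 = 1$, and then repeat the truncation/renormalization argument already used in the proof of Lemma \ref{lemma:KLbound}: by the triangle inequality in $L_2$,
\begin{equation*}
\rho(f_0, f_I) = \Big\| g_0 - \frac{g_K}{\|g_K\|_2} \Big\|_2 \le \|g_0 - g_K\|_2 + \big| \|g_K\|_2 - 1 \big| \le 2 \|g_0 - g_K\|_2,
\end{equation*}
using $\big|\|g_K\|_2 - 1\big| = \big|\|g_K\|_2 - \|g_0\|_2\big| \le \|g_K - g_0\|_2$. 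Combining with the tail bound gives $\rho(f_0, f_I) \lesssim K^{-(\beta-1/2)} \asymp I^{-(\beta-1/2)}$, which is the claimed rate.

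I expect the main obstacle to be the combinatorial bound $I = O(K)$ relating the size of the common dyadic refinement to the number of retained wavelet coefficients. One has to be careful that the retained coefficients may sit at very different scales and spatial locations, so the supports of the corresponding wavelets form an arbitrary finite collection of dyadic boxes; the claim is that the coarsest common refinement into a binary partition (in the recursive sense of Section \ref{subsec:sieve}) still has only $O(K)$ cells, with the constant absorbing the dimension $p$. This is true because adding one dyadic box to an existing binary partition splits a bounded number of cells (each split along one coordinate at the midpoint, iterated at most a scale-dependent but, crucially, coefficient-count-independent number of times per box — and a more careful accounting shows the total is linear in $K$), but writing this cleanly requires either an explicit inductive construction of the partition or a counting argument on the dyadic tree generated by the supports of $\psi_{(1)},\dots,\psi_{(K)}$. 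Everything else — the Parseval tail estimate, the renormalization — is routine and parallels arguments already in the paper.
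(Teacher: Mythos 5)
Your construction is the same as the paper's: truncate the Haar expansion of $g_0$ to the $K$ largest coefficients, bound the tail by Parseval and comparison with $\int_K^\infty x^{-2\beta}dx$ to get $\|g_0-g_K\|_2 \lesssim K^{-(\beta-1/2)}$, then square and renormalize. The renormalization step also matches: the paper expands $\|g_0-\tilde g_K\|_2^2=\|g_0-g_K\|_2^2+(1-1/\|g_K\|_2)^2\|g_K\|_2^2\le 2\|g_0-g_K\|_2^2$, while your triangle-inequality argument with $\bigl|\|g_K\|_2-1\bigr|\le\|g_K-g_0\|_2$ gives the constant $2$ instead of $\sqrt2$; both are fine. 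These analytic steps are complete and correct.

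The combinatorial step you flag as the main obstacle is exactly where the paper is briefest, and it is the one step your proposal does not actually supply. The paper's entire justification is that the positive and negative parts of each retained basis function subdivide its supporting rectangle into at most $2^p$ subregions, so that $\tilde g_K^2$ is piecewise constant on a binary partition of size at most $2^pK$; it then substitutes $K=I/2^p$. Your worry is substantive and is not dispelled by that count: it bounds the number of constant pieces of $g_K$, but a binary partition in the recursive sense of Section~\ref{subsec:sieve} that isolates a dyadic cube at scale level $l$ must also contain all the intermediate cells produced along the chain of midpoint cuts leading down to it, which is of order $pl$ cells for that single cube. Since condition \eqref{eq:powlawdecay} constrains only the sorted magnitudes of the coefficients and says nothing about their scales or locations, the $K$ largest coefficients may sit at arbitrarily deep and spatially separated nodes of the dyadic tree, in which case the minimal recursive binary partition on which $g_K$ is constant is not $O(K)$. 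So your closing assertion that ``a more careful accounting shows the total is linear in $K$'' is precisely the claim that needs proof, and it does not hold without some further input --- for example, selecting the retained coefficients by a magnitude-versus-depth trade-off rather than by magnitude alone, or an assumption linking coefficient size to scale. Apart from this gap, which the paper shares at the level of rigor it offers, your proof coincides with the paper's.
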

\begin{proof}
  Let $g_K = \sum_{k=1}^{K} <g_0,\psi_{(k)}> \psi_{(k)}$. From
  condition (\ref{eq:powlawdecay}) we have
  \begin{eqnarray}
    \label{eq:approxsparse}
   \nonumber \rho^2(f_0, g_K^2) &=& \| g_0-g_K\|_2^2 = \|
   \sum_{k=K+1}^{+ \infty} <g_0, \psi_{(k)}> \psi_{(k)} \|_2^2 \\
\nonumber &=& \sum_{k=K+1}^{+ \infty} <g_0,\psi_{(k)}>^2 \\
&\leq& C^2 \sum_{k=K+1}^{+ \infty} k^{-2\beta} \leq \frac{C^2}{2\beta -1} K^{-(2\beta-1)}.
  \end{eqnarray}
Then we normalize $g_K$ to $\tilde{g}_K$ and obtain 
\begin{eqnarray}
  \label{eq:normalizesparse}
\nonumber \rho^2(f_0, \tilde{g}_K^2) &=& \| g_0 - \tilde{g}_K
  \|_2^2 \\
\nonumber &=& \|g_0 -g_K \|_2^2  + (1-\frac{1}{\|g_K \|_2})^2
\|g_K \|_2^2 \\
\nonumber &\leq& \|g_0 -g_K \|_2^2 +1- \|g_K \|_2^2 \\
\nonumber &=&2\|g_0 -g_K \|_2^2 \\
& \leq& \frac{2C^2}{2\beta -1} K^{-(2\beta-1)}.
\end{eqnarray}
Note that given a supporting rectangle, the
positive and negative parts of the Haar basis function defined on it
can further divide the original rectangle into smaller subregions, and
the total number of such subregions is upper bounded by
$2^p$. Therefore, $2^p
K$ is the largest possible sized binary partition on which the density function $\tilde{g}_K$ is piecewise constant. Replacing $K$ in
(\ref{eq:normalizesparse}) by $I/2^p$, we get the desired result of
approximation rate.
\end{proof}
The next theorem calculates the convergence rate in this case.
\begin{theorem}
  (Application to spatial adaptation) Assume $f_0$ is the same as
  defined in Lemma (\ref{lemma:sparsedensity}). If we apply the maximum
  likelihood density estimator based on adaptive partitioning here to estimate the true density function, the convergence rate is $ n^{- \frac{\beta- 1/2}{2\beta}}
  (\log n)^{\frac{1}{2}+\frac{\beta- 1/2}{2\beta} }$.
\end{theorem}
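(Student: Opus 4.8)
The plan is to obtain this theorem as an immediate corollary of Theorem~\ref{th:convergencerate}, with the approximation exponent supplied by Lemma~\ref{lemma:sparsedensity}. First I would verify that $f_0$ belongs to the class $\mathcal{F}_0$ on which Theorem~\ref{th:convergencerate} is stated. The second defining property of $\mathcal{F}_0$ --- existence of a sequence $f_I \in \Theta_I$ with $\rho(f_0, f_I) \leq A I^{-r}$ --- is exactly what Lemma~\ref{lemma:sparsedensity} provides, namely $\rho(f_0, f_I) \leq c\, I^{-(\beta - 1/2)}$, so we may take $r = \beta - 1/2$ (which is positive since $\beta > 1/2$) and $A = c$, the constant depending on $\beta$ and $p$ but not on $I$. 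The first defining property, $\int_\Omega f_0^2 < \infty$, is immediate in all the examples of Section~\ref{subsec:sparsity}, whose densities are bounded; in general I would either carry it as a mild standing assumption or deduce it from a Besov embedding for $g_0 = \sqrt{f_0}$, and I would flag this explicitly.

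With $f_0 \in \mathcal{F}_0$ established, I would invoke Theorem~\ref{th:convergencerate} with $A = c$ and $r = \beta - 1/2$, choosing $I$ as prescribed by \eqref{eq:thmC2}, i.e. $I = \big( (2^8 c^2 (\beta - 1/2)/c_1)\, n/\log n \big)^{1/(2\beta)}$; the theorem then gives, with probability tending to one, convergence of $\hat{f}_{n,I}$ to $f_0$ in Hellinger distance at the rate $n^{-r/(2r+1)} (\log n)^{1/2 + r/(2r+1)}$. It remains only to simplify the exponents: since $2r + 1 = 2\beta$, we have $\frac{r}{2r+1} = \frac{\beta - 1/2}{2\beta}$, so the rate becomes $n^{-\frac{\beta - 1/2}{2\beta}} (\log n)^{\frac{1}{2} + \frac{\beta - 1/2}{2\beta}}$, which is precisely the assertion.

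I do not anticipate a genuine obstacle here: the analytic substance has already been discharged in Lemma~\ref{lemma:sparsedensity} --- whose crux was bounding the Haar tail $\sum_{k > K} \langle g_0, \psi_{(k)}\rangle^2$ by the power law \eqref{eq:powlawdecay} and controlling by $2^p K$ the size of the binary partition needed to represent a $K$-term Haar sum --- and in Theorem~\ref{th:convergencerate} itself. The only points requiring care are the square-integrability check just mentioned and the elementary bookkeeping of the logarithmic powers and of how the constant $c$ from Lemma~\ref{lemma:sparsedensity} feeds through \eqref{eq:thmC2} into the final constant in the rate.
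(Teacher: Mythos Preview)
Your proposal is correct and takes exactly the same approach as the paper: the paper's proof is a single sentence stating that the result follows directly from Theorem~\ref{th:convergencerate} and Lemma~\ref{lemma:sparsedensity}. Your substitution $r = \beta - 1/2$ and the exponent simplification $r/(2r+1) = (\beta - 1/2)/(2\beta)$ are precisely the intended bookkeeping, and your extra care about the square-integrability hypothesis of $\mathcal{F}_0$ is a point the paper leaves implicit.
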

\begin{proof}
  This follows Theorem \ref{th:convergencerate} and Lemma
  \ref{lemma:sparsedensity} directly.
\end{proof}

From the theorem we see that the convergence rate only depends on how
fast the coefficients decay as opposed to the dimension of the sample
space. Thus for large $\beta$, the density estimate is able to take
advantage of spatial sparsities to achieve fast convergence rate even
in high dimensions.
\section{Estimation of functions of bounded variation}
In image analysis, the denoised image is usually assumed to be a function of
bounded variation. Obtaining an approximation is a crucial procedure
before any downstream analysis. Currently, nonlinear approximation,
such as wavelet compression \cite{DeVore} and wavelet
shrinkage or thresholding \cite{donoho1995}, has been widely used in this field, contributing to problems including image compression and
image segmentation. In this section, we treat the denoised image as a
density function, and apply
this multivariate density estimation method to obtain the
approximation. We evaluate the performance of this method by
calculating the convergence rate when the density function is of bounded
variation (Section \ref{subsec:bdfunrate}). Actually, its performance is
comparable to that of wavelet thresholding. This point becomes clearer
in the companion papar \cite{linxiliu2015}, where under Bayesian
settings we show that
minimax convergence rate can be achieved for the space of bounded
variation (BV) up to a logarithmic term. To begin with, we briefly introduce the
space BV in Section \ref{subsec:bdfun}.

\subsection{The space BV}
\label{subsec:bdfun}
Let $\Omega= [0, 1)^2$ be a domain in
$\mathbb{R}^2$. We define the space $BV(\Omega)$ of functions of
bounded variation on $\Omega$ as follows.

For a vector $\nu \in \mathbb{R}^2$, the difference operator
$\Delta_{\nu}$ along the direction $\nu$ is defined by 
\begin{equation*}
  \Delta_{\nu}(f, y) := f( y+\nu) - f(y).
\end{equation*}
For functions $f$ defined on $\Omega$, $\Delta_{\nu}(f, y)$ is defined
whenever $y \in \Omega(\nu)$, where $\Omega(\nu) := \{ y: [y, y+\nu]
\subset \Omega \}$ and $[y, y+\nu]$ is the line
segment connecting $y$ and $y+\nu$. Denote by $e_l, l=1,2$ the two coordinate vectors in $\mathbb{R}^2$. We say that a function $f
\in L_1 (\Omega)$ is in $BV(\Omega)$ if and only if
\begin{equation*}
  V_{\Omega}(f) := \sup_{h>0} h^{-1} \sum_{l=1}^2 \| \Delta_{h e_l}
  (f, \cdot)\|_{L_1 (\Omega(h e_l) )} = \lim_{h \rightarrow 0} h^{-1} \sum_{l=1}^2 \| \Delta_{h e_l}
  (f, \cdot)\|_{L_1 (\Omega(h e_l) )}
\end{equation*}
is finite. The quantity $V_{\Omega}(f)$ is the \emph{variation} of $f$
over $\Omega$. 

$p$-dimensional bounded variation function can be defined
similarly. In \cite{donoho1993}, the author demonstrated that, in one dimension wavelet
representations of bounded variation balls are optimal. The result
of optimality can be extended to two-dimensional cases as well. Given
the fact that the density functions defined on binary
partitions are essentially equivalent to the Haar bases, it
is natural to ask what the convergence rate will be if we apply this
multivariate density estimator to $BV(\Omega)$. This result will be
presented in the section below. 
\subsection{Convergence rate} 
\label{subsec:bdfunrate}
In this section, we still use the Haar basis defined in Section
\ref{subsec:Haarbasis1}. Let $\Lambda$ be the set of indices for
the wavelet basis. Each element in $\Lambda$ is a pair of scale
and location parameters. We will denote by $\Sigma_N$ the spaces
consisting of $N$-term approximation in the Haar system, in other words,
\begin{equation*}
  \Sigma_N := \{ \sum_{\lambda \in E} c_{\lambda} \psi_{\lambda}: E
  \subset \Lambda, |E| \leq N \},
\end{equation*}
where $|E|$ denotes the cardinality of the discrete set $E$.

First, we cite a theorem from \cite{cohen1999}. It provides a result
on the approximation rate to a function of bounded variation by $\Sigma_N$.
\begin{lemma}
  If $f \in BV(\Omega)$ has mean value zero on $\Omega$, we have
  \begin{equation}
    \label{eq:bdapproximation}
    \inf_{g \in \Sigma_N} \| f - g \|_{L_2(\Omega)} \leq C N^{-1/2}
    V_{\Omega} (f),
  \end{equation}
with $C=2592(3\sqrt{5} +\sqrt{3})$.
\end{lemma}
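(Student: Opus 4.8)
The plan is to recognize this as the two-dimensional nonlinear-approximation theorem of Cohen, DeVore, Petrushev and Xu \cite{cohen1999} and to invoke it directly; for orientation I will also sketch the argument one would give. Since $f$ has mean zero on $\Omega=[0,1)^2$, the $L_2$-normalized Haar wavelets $\{\psi_\lambda\}_{\lambda\in\Lambda}$ form an orthonormal basis of the closed subspace of $L_2(\Omega)$ they span, and $f=\sum_{\lambda\in\Lambda}c_\lambda\psi_\lambda$ with $c_\lambda=\langle f,\psi_\lambda\rangle$. For any orthonormal system the best $N$-term approximation is obtained by retaining the $N$ coefficients of largest magnitude, so, writing $(c^*_k)_{k\ge1}$ for the decreasing rearrangement of $(|c_\lambda|)$,
\begin{equation*}
  \inf_{g\in\Sigma_N}\|f-g\|_{L_2(\Omega)}^2=\sum_{k>N}(c^*_k)^2 .
\end{equation*}
Hence it suffices to prove the weak-$\ell^{1}$ bound $c^*_k\le M/k$ for all $k$, with $M=C'V_\Omega(f)$: then $\sum_{k>N}(c^*_k)^2\le M^2\sum_{k>N}k^{-2}\le M^2/N$, giving $\inf_{g\in\Sigma_N}\|f-g\|_{L_2(\Omega)}\le MN^{-1/2}=C'V_\Omega(f)N^{-1/2}$, so the whole problem reduces to establishing this embedding with $C'=2592(3\sqrt5+\sqrt3)$.

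The heart of the matter is therefore the bound $\#\{\lambda:|c_\lambda(f)|>t\}\le C'V_\Omega(f)/t$ for every $t>0$. I would first prove it for $f=\mathbbm{1}_E$ with $E\subset\Omega$ of finite perimeter. Each $\psi_\lambda$ is supported on a dyadic square $Q_\lambda$ of side $2^{-j}$ (when $\lambda$ has scale $j$), with $\|\psi_\lambda\|_\infty=2^{j}$ and $\int\psi_\lambda=0$; consequently $c_\lambda(\mathbbm{1}_E)=0$ unless $Q_\lambda$ meets the reduced boundary $\partial^*E$, and for such $\lambda$ one has $|c_\lambda(\mathbbm{1}_E)|\le\|\psi_\lambda\|_\infty|Q_\lambda|=2^{-j}$. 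A box-counting estimate bounds the number of level-$j$ dyadic squares meeting $\partial^*E$ by $\lesssim 2^j\,\mathrm{Per}(E)$, and there are three Haar generators per square; since only scales with $2^{-j}>t$ can contribute to $\#\{\lambda:|c_\lambda|>t\}$, summing the geometric series over those scales gives $\#\{\lambda:|c_\lambda(\mathbbm{1}_E)|>t\}\lesssim\mathrm{Per}(E)\sum_{2^{-j}>t}2^j\lesssim\mathrm{Per}(E)/t$. Passing to a general $f$ then uses the coarea formula $V_\Omega(f)=\int_{\mathbb{R}}\mathrm{Per}(\{f>s\};\Omega)\,ds$ together with $c_\lambda(f)=\int_{\mathbb{R}}c_\lambda(\mathbbm{1}_{\{f>s\}})\,ds$ and a summation of the level-set estimates carried out carefully with respect to the $\ell^{1,\infty}$ quasi-norm.

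The explicit constant $C=2592(3\sqrt5+\sqrt3)$ is obtained by carrying the numerical factors through each step: the isoperimetric/Poincar\'e constant of $\mathbb{R}^2$ used to control the size of a coefficient of a perimeter (or BV) piece on a square, the constant in the dyadic box-counting of $\partial^*E$, the factor three for the Haar generators, the geometric-series factor, and the loss in reassembling the level sets. The main obstacle is the weak-$\ell^{1}$ embedding itself: it is genuinely two-dimensional — the analogous statement fails in dimension $\ge3$ — and, because $\ell^{1,\infty}$ is only quasi-normed (there is no integral Minkowski inequality), the passage from the indicators $\mathbbm{1}_{\{f>s\}}$ to $f$ via the coarea formula is not a one-line triangle inequality but requires the layered/stopping-time argument of \cite{cohen1999}. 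For the purposes of this paper I would simply cite their theorem; the sketch above records why the $N^{-1/2}$ rate, and a constant of this form, are what one obtains.
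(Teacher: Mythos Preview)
Your proposal is correct and matches the paper's approach: the paper does not prove this lemma but simply cites it from \cite{cohen1999}, exactly as you propose. Your additional sketch of the weak-$\ell^1$ embedding argument is accurate and goes beyond what the paper provides, but for the purposes of matching the paper, the direct citation is all that is needed.
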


Assume $f_0$ is a density function on $\Omega$ of bounded
variation. By subtracting the mean, we can always assume that $\sqrt{f_0}$ has
mean value zero over $\Omega$. For the square root of $f_0$, applying the
lemma above, we can find an $N$-term approximation $g$ in the Haar system,
such that $\|\sqrt{f_0} -g \|_{L_2(\Omega)} \lesssim
N^{-1/2}$. Translating this inequality into the size of partition, we
reach the conclusion that for a density function in $BV(\Omega)$, we
can find an approximation in $\Theta_I$, such that $\rho(f_0, f_I) \lesssim
I^{-1/2}$. Now we are ready to state the result of the convergence
rate.

\begin{theorem}
  Assume that $f_0 \in BV(\Omega)$. If we apply the multivariate
  density estimator based on adaptive partitioning here to estimate $f_0$, the convergence rate is
  $n^{-1/4} (\log n)^{3/4}$.
\end{theorem}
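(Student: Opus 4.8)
The plan is to reduce the claim to a direct application of Theorem \ref{th:convergencerate} together with the approximation bound just established for the space $BV(\Omega)$. First I would record the approximation rate: by the cited lemma from \cite{cohen1999}, for $f_0 \in BV(\Omega)$ with $\sqrt{f_0}$ centered to have mean zero, there is an $N$-term Haar approximation $g$ to $\sqrt{f_0}$ with $\|\sqrt{f_0} - g\|_{L_2(\Omega)} \lesssim N^{-1/2} V_\Omega(\sqrt{f_0})$. Exactly as in the spatial-sparsity argument (Lemma \ref{lemma:sparsedensity}), the positive and negative parts of each two-dimensional Haar basis function subdivide its supporting rectangle into at most $2^2 = 4$ dyadic subrectangles, so an $N$-term Haar sum is piecewise constant on a binary partition of size at most $4N$; normalizing $g^2$ to a density costs only a factor of $2$ in squared Hellinger distance (the same computation as in \eqref{eq:normalizesparse}). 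Hence there exists $f_I \in \Theta_I$ with $\rho(f_0, f_I) \lesssim I^{-1/2}$, i.e. the density $f_0$ lies in $\mathcal{F}_0$ with complexity index $r = 1/2$ (and $\int_\Omega f_0^2 < \infty$, which holds since a bounded-variation function on a bounded rectangle is bounded, hence square-integrable — this should be noted to verify membership in $\mathcal{F}_0$).

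Next I would simply invoke Theorem \ref{th:convergencerate} with $r = 1/2$. Plugging into the stated rate $n^{-r/(2r+1)} (\log n)^{1/2 + r/(2r+1)}$: with $r = 1/2$ one has $2r+1 = 2$, so $r/(2r+1) = 1/4$ and the exponent of $\log n$ becomes $1/2 + 1/4 = 3/4$. This yields the convergence rate $n^{-1/4} (\log n)^{3/4}$, as claimed. Concretely, one chooses $I$ according to \eqref{eq:thmC2}, namely $I = \bigl((2^8 A^2 r / c_1)\, n/\log n\bigr)^{1/(2r+1)}$ with $r = 1/2$, which gives $I \asymp (n/\log n)^{1/2}$, and the conclusion $\mathbb{P}_{f_0}\bigl(\rho(\hat f_{n,I}, f_0) \geq D\, n^{-1/4}(\log n)^{3/4}\bigr) \to 0$ follows directly.

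The only genuinely non-routine point — and the one I would be most careful about — is the passage from the $L_2$ approximation of $\sqrt{f_0}$ to a Hellinger approximation of $f_0$ by an element of $\Theta_I$, since $\Theta_I$ consists of \emph{bona fide} densities (nonnegative, integrating to one) supported on binary partitions, whereas the raw $N$-term Haar sum $g$ is neither nonnegative nor normalized and its square need not integrate to exactly one. The normalization step is handled exactly as in Lemma \ref{lemma:sparsedensity} (the bound $\rho^2(f_0, \tilde g^2) \leq 2\|\sqrt{f_0} - g\|_2^2$), and the ``$4N$ pieces'' counting is the two-dimensional instance of the $2^p$ bound used there; everything else is bookkeeping. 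Since the companion paper \cite{linxiliu2015} shows this matches the minimax rate over BV up to the logarithmic factor, no sharper approximation argument is needed here.
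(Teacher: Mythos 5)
Your proposal is correct and follows essentially the same route as the paper: invoke the Cohen--DeVore--Petrushev--Xu $N$-term Haar approximation bound, convert it (via the $2^p$-piece counting and the normalization step of Lemma \ref{lemma:sparsedensity}) into $\rho(f_0,f_I)\lesssim I^{-1/2}$, and apply Theorem \ref{th:convergencerate} with $r=1/2$; you merely spell out the bookkeeping that the paper compresses into one sentence. One small caveat: your parenthetical justification that $\int_\Omega f_0^2<\infty$ because ``a bounded-variation function on a bounded rectangle is bounded'' is false in dimension two (e.g.\ $|y|^{-1/2}$ is in $BV$ of a planar domain but unbounded); the conclusion nevertheless holds via the embedding $BV(\Omega)\hookrightarrow L^{2}(\Omega)$ for $\Omega\subset\mathbb{R}^2$.
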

\begin{proof}
  The proof follows Theorem \ref{th:convergencerate} and the previous
  approximation result directly.
\end{proof}

\section{Application to variable selection}
\label{sec:variableselection}
For high dimensional data analysis, selecting significant variables
greatly contributes to simplifying the model, improving model
interpretability, and reducing overfitting. In the context of density estimation, the variable
selection problem is
formulated as follows. Assume $f_0$ is a $p$-dimensional density
function and it only depends on $\tilde{p}$
variables, but we do not know  in
advance which $\tilde{p}$ variables. We apply the multivariate density
estimation method here to the $p$-dimensional density. The essential part of our
method is to learn a partition of the support of the true
density function, and
then to estimate the density on each subregion separately. Because the
true density lies in a $\tilde{p}$-dimensional subspace, we may surmise
that the corresponding convergence rate only depends on the effective
dimension. The goal of this section is to carry out exact
calculations to reveal that this is indeed the case. Here,
we consider
a class of density functions satisfying certain type of continuity. We
first provide a mathematical description of this density class in
Section \ref{subsec:functionclass}. This description still depends on the Haar transform of the true density function. However, an alternative
construction of the high-dimensional Haar basis via tensor product
is introduced in Section \ref{subsec:Haarbasis2} because of some
technical issue. We provide the result
on the explicite convergence rate in Section \ref{subsec:variableselectionrate}.

\subsection{Tensor Haar basis}
\label{subsec:Haarbasis2}
In one dimension, the Haar wavelet's mother wavelet function $\psi$
and its scaling function $\phi$ are the same as those defined in
Section \ref{subsec:Haarbasis1}. For any $l \in \mathbb{N}$ and $0\leq k < 2^l$, the Haar function is
\begin{equation*}
  \psi^l_k (y) = 2^{l/2} \psi(2^l y -k).
\end{equation*}
Then Haar basis \textbf{$\Psi$} is the collection of all Haar
functions together with the scale function. Namely,
\begin{equation*}
  \textbf{$\Psi$} = \{ \phi \} \cup \{ \psi^l_k, l \in \mathbb{N},
  0\leq k < 2^l \}.
\end{equation*}
It is an orthonormal basis for Hilbert space $L^2([0,1])$. 

Turning to high-dimensional settings, we can obtain an orthonormal
basis for $L^2 ([0,1]^p)$ by using the fact that the Hilbert space $L^2 ([0,1]^p)$ is
isomorphic to the tensor product of $p$ one-dimensional spaces. In
detail, if $\mathcal{X}_1, \cdots, \mathcal{X}_p$ are $p$ copies of
$L^2 ([0,1])$ and \textbf{$\Psi_1$}, $\cdots$, \textbf{$\Psi_p$} are
Haar bases of these spaces respectively, then $L^2 ([0,1]^p)$ is isomorphic to $\bigotimes_{i=1}^p
\mathcal{X}_i$. Define tensor Haar basis \textbf{$\Psi$} by
\begin{equation*}
  \textbf{$\Psi$} = \{\psi: \psi= \prod_{i=1}^p \psi_i, \psi_i \in \textbf{$\Psi_i$} \}.
\end{equation*}
From the property of tensor product of Hilbert spaces, we know that
\textbf{$\Psi$} is an orthonormal basis for $L^2 ([0,1]^p)$.

\subsection{Mixed-H\"older continuity}
\label{subsec:functionclass}
In this section, we assume the density function satisfies the \emph{Mixed-H\"older} condition.
A similar condition first appeared in \cite{stromberg1998}. The author showed that tensor Haar basis with large
support is efficient in representing certain type of functions on
$[0,1]^p$. More precisely, if the function
satisfies the \emph{bounded mixed variation} condition, then it can be
approximated to error $\epsilon>0$ using no more than $
\frac{1}{\epsilon} (\log (\frac{1}{\epsilon}) )^{p-1}$ terms, and the
volume of the supporting rectangle of
the each wavelet basis function involved in the approximation is
greater than $\epsilon$. However, the result is restrictive in
application since the
mixed derivative is not rotationally invariant. In \cite{gavish2012},
the authors extended the previous approximation scheme to matrix. A significant improvement is that, in their paper, the bounded mixed variation
condition is replaced by the Mixed-H\"older condition, which is more
natural and accessible. For two-dimensional discrete analyses of matrices,
they show that the approximation is still efficient for this class of matrices, and the Mixed-H\"older
condition is connected to the decay rate of wavelet coefficients. The
idea of controlling the decay rate of wavelet coefficients will be further
developed here. It leads to the characterization of
the density class under consideration.

For any $f \in \mathcal{F}$, $\sqrt{f} \in L^2 ([0,1]^p)$. Therefore, we can expand $\sqrt{f}$ with respect to
the tensor Haar basis. Let $g= \sqrt{f}$. Then 
\begin{equation*}
g = \sum_{\psi \in \textbf{$\Psi$}} <g,
\psi> \psi,\ \mbox{where} <g,\psi>=\int_{\Omega} g(y) \psi(y) dy.
\end{equation*} 
For each tensor Haar function $\psi$, let $R(\psi)$ denote its supporting
rectangle. The density class under consideration is defined to be
\begin{equation*}
  \mathcal{F}_H = \{ f \in \mathcal{F}: |<\sqrt{f}, \psi>| \leq C |R(\psi)|^{\alpha+1/2}
  \mbox{ for all } \psi \in \textbf{$\Psi$} \},
\end{equation*}
where $C$ is a constant which may depend on $f$, $|R(\psi)|$ denotes the volume of the
rectangle and $\alpha$ is a positive constant.

Next, we provide several examples to illustrate how this
condition relates to the Mixed-H\"older continuity.
\begin{example}
A real-valued function $f$ on $\mathbb{R}$ is \emph{H\"older continuous}, if
there exist nonnegative constants $C$ and $\alpha \in (0, 1]$, such that $|f(x) - f(y)|
\leq C |x-y|^{\alpha}$, for all $x, y \in \mathbb{R}$.
If square root of the true density function $f_0$ is H\"older continuous for some
constants $C, \alpha$, then for any Haar basis function $\psi$,
$|<\sqrt{f_0}, \psi>| \leq C |R(\psi)|^{\alpha+1/2}$. 
\begin{proof}
  From H\"older continuity, we know that for any $x, y \in [0,1]$,
  $|\sqrt{f_0(x)}- \sqrt{f_0(y)}| \leq C |x-y|^{\alpha}$. For any
  point $x_0 \in R$, we have
  \begin{eqnarray*}
    |<\sqrt{f_0}, \psi>| ^2 &=& \left( \int_{R} \sqrt{f_0(x)} \psi(x) dx
    \right)^2 \\
    &=& \left( \int_{R} \left(\sqrt{f_0(x)} -\sqrt{f_0(x')} \right) \psi(x) dx \right)^2 \\
&\leq& \int_{R} \left(\sqrt{f_0(x)} -\sqrt{f_0(x')} \right)^2 dx \cdot \int_{R} \psi(x)^2 dx \\
&\leq& C^2 \int_{R} |x-x_0|^{2\alpha} dx \\
&=& C^2 |R|^{2\alpha+1}.
  \end{eqnarray*}
The desired results follows.
\end{proof}
\end{example}

\begin{example}
A real-valued function $f$ on $\mathbb{R}^2$ is called
Mixed-H\"older continuous for some nonnegative constant $C$ and
$\alpha \in (0, 1]$, if for any $(x_1, y_1), (x_1, y_2) \in
\mathbb{R}^2$,
\begin{equation*}
  |f(x_2, y_2) - f(x_2, y_1) - f(x_1, y_2) + f(x_1, y_1) | \leq C |x_1
  -x_2|^{\alpha} |y_1 -y_2 |^{\alpha}.
\end{equation*}
If square root of the two-dimensional true density function $f_0$ is
Mixed-H\"older continuous for some constants $C, \alpha$, then $f_0
\in \mathcal{F}_H$. The proof is similar to the one-dimensional case.
\end{example}

More generally, this type of continuity condition can be extended to
high-dimensional cases \cite{stromberg1998}. The corresponding density functions also belong to
the space $\mathcal{F}_H$. Therefore, in this section, we use the more
general condition
$|<\sqrt{f}, \psi>| \leq C |R(\psi)|^{\alpha+1/2}$ for all $\psi$ to characterize the density class.

\subsection{Convergence rate} 
\label{subsec:variableselectionrate}
First, we provide a result on the rate at which the approximation
error decreases to zero as the complexity of the approximating spaces increases.
\begin{lemma}
  \label{th:approximationrate}
$\mathcal{F}_H$ and $\Theta_I$ are defined as above. For any $f_0
\in \mathcal{F}_H$, there exists a sequence of $f_I \in \Theta_I$, such
that $\rho(f_0, f_I) \lesssim I^{-\alpha/p} (\log I)^{p/2}$, where $\alpha$ is as defined
in the Section \ref{subsec:functionclass}, and $p$ is the dimension of
the Euclidean space. 
\end{lemma}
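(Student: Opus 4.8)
The plan is to transfer the known $N$-term approximation rate for functions in $\mathcal{F}_H$ (via the tensor Haar basis) into an approximation rate by piecewise-constant densities on binary partitions of size $I$, and then normalize. Write $g_0 = \sqrt{f_0}$ and expand it in the tensor Haar basis, $g_0 = \sum_{\psi \in \Psi} \langle g_0, \psi\rangle \psi$. The defining condition $|\langle g_0, \psi\rangle| \leq C |R(\psi)|^{\alpha + 1/2}$ says that the coefficient of a basis function supported on a rectangle of volume $v$ is at most $C v^{\alpha+1/2}$. First I would organize the basis functions by the volume of their supporting rectangles: at "resolution level" $j$ (meaning all the one-dimensional factors have been refined a total of $j$ times, so the rectangle has volume $2^{-j}$) there are on the order of $\binom{j+p-1}{p-1} \asymp j^{p-1}$ distinct "shapes" of rectangle times $2^j$ translates, i.e. roughly $2^j j^{p-1}$ basis functions, each with coefficient bounded by $C 2^{-j(\alpha+1/2)}$. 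Summing the squared tail beyond level $L$ gives $\sum_{j > L} 2^j j^{p-1} \cdot C^2 2^{-j(2\alpha+1)} = C^2 \sum_{j>L} j^{p-1} 2^{-2\alpha j} \lesssim L^{p-1} 2^{-2\alpha L}$, so truncating at level $L$ yields $\|g_0 - g_L\|_2^2 \lesssim L^{p-1} 2^{-2\alpha L}$.

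Next I would count the size of the binary partition on which $g_L$ (equivalently $g_L^2$, after normalization) is piecewise constant. The number of basis functions retained up to level $L$ is $K := \sum_{j \leq L} 2^j j^{p-1} \asymp 2^L L^{p-1}$. Exactly as in Lemma \ref{lemma:sparsedensity}, each retained Haar function subdivides its supporting rectangle into at most $2^p$ pieces, so $g_L^2$ is piecewise constant on a binary partition of size at most $2^p K \asymp 2^L L^{p-1}$. Now invert: if $I \asymp 2^L L^{p-1}$, then $2^L \asymp I / L^{p-1}$ and (taking logs) $L \asymp \log I$, hence $2^L \gtrsim I (\log I)^{-(p-1)}$ and $2^{-2\alpha L} \lesssim I^{-2\alpha} (\log I)^{2\alpha(p-1)}$. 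Feeding this back into the tail bound, $\|g_0 - g_L\|_2^2 \lesssim L^{p-1} 2^{-2\alpha L} \lesssim (\log I)^{p-1} \cdot I^{-2\alpha}(\log I)^{2\alpha(p-1)} = I^{-2\alpha} (\log I)^{(p-1)(2\alpha+1)}$. Finally, normalize $g_L$ to a unit-$L_2$-norm function $\tilde g_L$ and set $f_I = \tilde g_L^2 \in \Theta_I$; the normalization argument already used in Lemmas \ref{lemma:boundofH} and \ref{lemma:sparsedensity} shows $\rho^2(f_0, f_I) = \|g_0 - \tilde g_L\|_2^2 \leq 2\|g_0 - g_L\|_2^2$, so $\rho(f_0, f_I) \lesssim I^{-\alpha}(\log I)^{(p-1)(\alpha+1/2)}$.

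The stated bound in the lemma is $\rho(f_0, f_I) \lesssim I^{-\alpha/p}(\log I)^{p/2}$, which is weaker than what the naive count above suggests, so I expect the genuine subtlety is in how the $p$ one-dimensional refinements interact — in particular, whether "resolution level $j$" should be measured per-coordinate (each of the $p$ factors refined up to level $\ell$, giving rectangles of volume $2^{-p\ell}$ and roughly $2^{p\ell}$ of them, so that $K \asymp 2^{p\ell}\ell^{\,?}$ and the exponent on $I$ becomes $\alpha/p$ rather than $\alpha$). The main obstacle, then, is setting up the bookkeeping of the tensor-product index set correctly: deciding the right notion of truncation (a full tensor grid versus a sparse/hyperbolic-cross-type selection), counting the retained basis functions and the induced partition size as a function of the truncation parameter, and matching the resulting $(\log I)$ powers. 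Once that combinatorial accounting is pinned down, the analytic steps — geometric tail summation using the coefficient decay, the $2^p$-subdivision count, inversion of $I$ in terms of the truncation level, and the normalization estimate $\rho^2(f_0, f_I) \leq 2\|g_0 - g_L\|_2^2$ — are all routine and parallel to Lemma \ref{lemma:sparsedensity}.
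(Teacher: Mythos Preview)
Your tail-sum calculation (truncating at total level $\sum_i l_i \leq L$ and bounding $\|g_0 - g_L\|_2^2 \lesssim L^{p-1} 2^{-2\alpha L}$) is correct and is exactly what the paper does, with $L$ playing the role of $N = \lceil \log_{1/2}\epsilon\rceil$. The normalization step is also fine.

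The genuine gap is precisely where you suspected: the partition-size bound $I \leq 2^p K$ does \emph{not} carry over from Lemma~\ref{lemma:sparsedensity} to the tensor Haar setting. That argument works for the isotropic Haar basis of Section~\ref{subsec:Haarbasis1} because the supporting sets there are dyadic \emph{cubes}, which are nested: adding one more basis function refines at most one existing cell into $2^p$ pieces. For the tensor Haar basis of Section~\ref{subsec:Haarbasis2}, two basis functions at the same total level, say $(l_1,l_2)=(L,0)$ and $(0,L)$, have supporting rectangles of very different shapes that are not nested; their common refinement already forces a product grid. More generally, once you keep all $(l_1,\dots,l_p)$ with $\sum_i l_i \leq L$, coordinate $i$ sees refinements up to level $L$ (take $l_i=L$, others $0$), so the resulting piecewise-constant function needs the full product grid of size $\asymp 2^{pL}$, not $2^L L^{p-1}$.

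This is exactly how the paper closes the argument: it observes that $|R(\psi)|>\epsilon$ forces every edge length to be at least $2^{-N}$, so the supporting partition has size at most $2^p\cdot 2^{pN} = 2^p \epsilon^{-p}$. Setting $I \asymp 2^{pN}$ gives $2^{-2\alpha N} \asymp I^{-2\alpha/p}$ and $N \asymp (\log I)/p$, whence $\rho(f_0,f_I) \lesssim I^{-\alpha/p}(\log I)^{p/2}$. So your ``per-coordinate'' intuition is right, and once you replace the $2^p K$ count by the full-grid count $2^{pL}$, the rest of your outline goes through and matches the paper's proof.
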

\begin{proof}
\label{th:approxrate}
  Let $g_0 = \sqrt{f_0}$. We can expand $g_0$ with respect to the tensor
  Haar basis. The expansion can be written as $g_0 = \sum_{\psi} <g_0,
  \psi> \psi$.

Let $g_{\epsilon} = \sum_{\psi: |R(\psi)| > \epsilon} <g_0, \psi>
\psi$. Then $g_{\epsilon}$ is an approximation to $g_0$ obtained by requiring that the volumes of the supporting rectangles of
the involved wavelet basis functions are greater
than $\epsilon$. We will derive an approximation rate as a function of
$\epsilon$ first, and then convert the lower bound on the volume to an upper bound
on the size of the partition. This yields an approximation rate as a function of the size of the partition.
Note that $g_{\epsilon}$ is not a density function, but it is easier
to work with. Let $\tilde{g}_{\epsilon} = g_{\epsilon}/ \|g_{\epsilon}
\|_2$ be the normalization of $g_{\epsilon}$. The upper bounds for the
approximation errors $\rho(f_0, g_{\epsilon} ^2)$ and $\rho(f_0,
\tilde{g}_{\epsilon}^2)$ will be derived successively.

Before delving into the proof, we introduce some notations first.
For each  supporting rectangle $|R(\psi)|$, the lengths of its edges
should be powers of $1/2$. We may assume that $\psi=\prod_{i=1}^p
\psi_i$, and for each $\psi_i$ the length of its supporting interval
is $(1/2)^{l_i}$. Let $\mathcal{R}^{l_1, \cdots, l_p}$ denote the
collection of the rectangles for which the lengths of the edges
are $(1/2)^{l_1}, \cdots, (1/2)^{l_p}$.

Recall that $f_0$ satisfies the condition
\begin{equation}
\label{eq:densityfuncond}
  | <\sqrt{f_0}, \psi>| \leq C |R(\psi)|^{\alpha + 1/2} \mbox{ for
    all } \psi.
\end{equation}
Then,
\begin{eqnarray}
\label{eq:approxI}
\nonumber \rho^2(f_0, g_{\epsilon}^2 ) &=& \| g_0 - g_{\epsilon} \| ^2 = \|
 \sum_{\psi: |R(\psi)| < \epsilon } <g_0, \psi> \psi \|_2^2\\ 
\nonumber &=& \sum_{\psi: |R(\psi)| <\epsilon} <g_0, \psi>^2 \\ 
\nonumber &\leq& C^2 \sum_{\psi: |R(\psi)|< \epsilon } |R(\psi)|^{2\alpha+1} \\
&\leq& 2^p C^2 \sum_{l_1, \cdots, l_p} \sum_{R \in \mathcal{R}^{l_1,
    \cdots, l_p}, |R| <\epsilon } |R|^{2\alpha+1}.
\end{eqnarray}
The last inequality follows from the fact that, given a supporting
rectangle, there are at most $2^p$
basis functions defined on it. Let $N= \lceil
\log_{\frac{1}{2}} \epsilon \rceil$,
\begin{eqnarray}
  \label{eq:approxII}
 \nonumber  (\ref{eq:approxI}) &=& 2^p c^2 \sum_{l_1 + \cdots + l_p \geq N }
  \sum_{R \in \mathcal{R}^{l_1, \cdots, l_p}} |R|^ {2\alpha+1} \\
\nonumber &=& 2^p C^2 \sum_{l_1+ \cdots+ l_p \geq N} (\frac{1}{2})^{2\alpha(l_1
  +\cdots +l_p)} \sum_{R \in \mathcal{R}^{l_1, \cdots, l_p}} |R| \\
&=& 2^p C^2 \sum_{l_1+ \cdots +l_p \geq N} (\frac{1}{2})^{2\alpha(l_1
  +\cdots +l_p)}. 
\end{eqnarray}
The last equality is obtained by plugging in $\sum_{R \in \mathcal{R}^{l_1,
    \cdots, l_p}} |R| =1$.
Note that
\begin{eqnarray}
  \label{eq:approxIII}
 & & \sum_{l_1+\cdots +l_p \geq N} ( \frac{1}{2} ) ^{2\alpha(l_1+\cdots
    +l_p)} \\
\nonumber &\leq& \sum_{l_1=0}^{N} \sum_{l_2=0}^{N-l_1} \cdots
\sum_{l_p=N-(l_1+\cdots +l_{p-1})}^{+ \infty}
(\frac{1}{2})^{2\alpha(l_1 +\cdots +l_p)} \\
\nonumber & &+ \sum_{l_1=0}^N \sum_{l_2=0}^{N-l_1} \cdots \sum_{l_{p-1}=N-(l_1
  +\cdots +l_{p-2})} ^{+\infty} \sum_{l_p=0}^{+\infty}
(\frac{1}{2})^{2\alpha(l_1 +\cdots +l_p)}  \\
\nonumber & &+ \cdots \\
\nonumber & &+ \sum_{l_1=N}^{+\infty} \sum_{l_2=0}^{+\infty} \cdots
\sum_{l_p=0}^{+\infty} (\frac{1}{2})^{2\alpha(l_1 +\cdots +l_p)} \\
\nonumber &\leq& (N+1)^{p-1} \frac{ (\frac{1}{2})^{2\alpha
    N}}{1-2^{-2\alpha}} + (N+1)^{p-2} \frac{ (\frac{1}{2})^{2\alpha
    N}}{(1-2^{-2\alpha})^2} + \cdots + \frac{ (\frac{1}{2})^{2\alpha
    N} } {(1-2^{-2\alpha})^p} \\
\nonumber &=& (\frac{1}{2})^{2\alpha N} \frac{(N+1)^p -
  (1-2^{-2\alpha})^{-p}} {(N+1)(1-2^{-2\alpha}) -1} \\
\nonumber &\leq& C' \epsilon^{2\alpha} (\log_{\frac{1}{2}} \epsilon)^p.
\end{eqnarray}
From this, we know that
\begin{equation}
  \label{eq:approxIV}
  \rho^2(f_0, g_{\epsilon}^2 ) = \|g_0 - g_{\epsilon} \|_2^2 \leq 2^p
  C' C^2 \epsilon^{2\alpha} (\log_{\frac{1}{2}} \epsilon)^p.
\end{equation}

We normalize $g_{\epsilon}$ to $\tilde{g}_{\epsilon}$, then
\begin{eqnarray*}
  \rho^2(f_0, \tilde{g}_{\epsilon}^2) &=& \| g_0 - \tilde{g}_{\epsilon}
  \|_2^2 \\
&=& \|g_0 -g_{\epsilon} \|_2^2  + (1-\frac{1}{\|g_{\epsilon} \|_2})^2
\|g_{\epsilon} \|_2^2 \\
&\leq& \|g_0 -g_{\epsilon} \|_2^2 +1- \|g_{\epsilon} \|_2^2 \\
&=&2\|g_0 -g_{\epsilon} \|_2^2 .
\end{eqnarray*}
The last equality is obtained by using $\|g_0 - g_{\epsilon} \|_2^2 +
\|g_{\epsilon} \|_2^2 =\|g_0 \|_2^2 =1$.Therefore, 
\begin{eqnarray}
  \label{eq:approxrate}
  \rho^2(f_0, \tilde{g}_{\epsilon}^2 ) = \|g_0 - \tilde{g}_{\epsilon}
  \|_2^2 \leq 2^p C'' C^2 \epsilon^{2\alpha} (\log_{\frac{1}{2}} \epsilon)^p,
\end{eqnarray}
where $C''$ is a constant.

Next, we will convert the lower bound on the volume of the supporting
rectangles to an upper bound on the size of the partition, and derive
the approximation rate in terms of the latter one.

If we require the volumes of the supporting rectangles be greater
than $\epsilon$, then the lengths of the edges can not be smaller than
$2^{-\lfloor \log_{\frac{1}{2}} \epsilon \rfloor}$. The size of the
partition supporting $\tilde{g}_{\epsilon}$ can be bounded by $2^p 2^{p
  \log_{\frac{1}{2}} \epsilon} = 2^p \epsilon^{-p}$. There is a
coefficient $2^p$ in front. This is the case because given a supporting rectangle, the
positive and negative parts of the tensor Haar basis defined on it
will further divide the original rectangle into smaller subregions and the
number of such subregions is at most $2^p$.

Given the size of the partition $I$, we can determine $\epsilon$ by
solving $2^p \epsilon^{-p} =I$ and define $\tilde{g}_{\epsilon} \in
\Theta_I$ as above. Then from (\ref{eq:approxrate}) we reach a conclusion that $\tilde{g}_{\epsilon}$ is an approximation satisfying
$\rho(f_0, \tilde{g}_{\epsilon}^2) \lesssim I^{-\alpha/p} (\log I)^{p/2}$. This finishes
the proof.
\end{proof}

\begin{theorem}
\label{th:variableselectionrate}
Assume that $f_0 \in \mathcal{F}_H$ is a $p$-dimensional density function. It only
depends on $\tilde{p}$ arguments which are not specified in
advance. If we apply
the multivariate density estimation method to this problem, the
convergence rate is $n^{-\frac{\alpha}{2\alpha +\tilde{p}} } (\log
n)^{1+\frac{\tilde{p} (\tilde{p}-1)/2}{2\alpha+\tilde{p}}}$.
\end{theorem}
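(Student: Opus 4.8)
The plan is to exploit the fact that $f_0$ depends on only $\tilde p$ of its $p$ arguments to reduce the problem to an effective dimension $\tilde p$, and then to re-run the argument behind Theorem~\ref{th:convergencerate} with the resulting approximation bound. Write $S\subset\{1,\dots,p\}$, $|S|=\tilde p$, for the relevant coordinates, so that $f_0(y)=\tilde f_0(y_S)$ for a density $\tilde f_0$ on $[0,1]^{\tilde p}$. First I would observe that in the tensor Haar expansion of $\sqrt{f_0}$ a basis function $\psi=\prod_{i=1}^p\psi_i$ satisfies $\langle\sqrt{f_0},\psi\rangle=0$ unless $\psi_i=\phi$ for every $i\notin S$: factoring the integral over the irrelevant coordinates produces $\prod_{i\notin S}\int_0^1\psi_i$, which vanishes as soon as some $\psi_i$ is a genuine Haar wavelet. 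For the surviving $\psi$ one has $\langle\sqrt{f_0},\psi\rangle=\langle\sqrt{\tilde f_0},\tilde\psi\rangle$ and $|R(\psi)|=|R(\tilde\psi)|$, where $\tilde\psi$ is the corresponding $\tilde p$-dimensional tensor Haar function. Hence $f_0\in\mathcal F_H$ forces $\tilde f_0$ to obey the $\tilde p$-dimensional version of the defining condition of $\mathcal F_H$, with the same $\alpha$ and $C$.

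Next I would repeat the proof of Lemma~\ref{th:approximationrate} in dimension $\tilde p$ rather than $p$. The truncated-and-normalized approximant $\tilde g_\epsilon^2$, built from the tensor Haar functions of $\sqrt{\tilde f_0}$ with $|R|>\epsilon$, is piecewise constant on a dyadic partition of the $S$-variables of size at most $2^{\tilde p}\epsilon^{-\tilde p}$; taking the product with the trivial one-cell partition of the remaining coordinates yields a binary partition of $[0,1]^p$ (only coordinates in $S$ are ever cut) of the same size. Solving $2^{\tilde p}\epsilon^{-\tilde p}=I$ then produces $f_I\in\Theta_I$ with $\rho(f_0,f_I)\lesssim I^{-\alpha/\tilde p}(\log I)^{\tilde p/2}$. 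This $f_I$ belongs to $\Theta_I$ with no knowledge of $S$, so the sieve MLE over $\Theta_I$ still competes with it; the ``variable selection'' is implicit.

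Finally I would propagate this bound through the proof of Theorem~\ref{th:convergencerate}. Because of the extra $(\log I)^{\tilde p/2}$ factor $f_0$ does not literally lie in $\mathcal F_0$ with a clean exponent $r$, so rather than quoting the theorem I would rerun its argument with $\epsilon_I:=c\,I^{-\alpha/\tilde p}(\log I)^{\tilde p/2}$ in the role of $A I^{-r}$: Lemma~\ref{lemma:hellingertokl}, applied exactly as in Lemma~\ref{lemma:KLbound} (with $\lambda=1/4$ and $M_\lambda\le(\int f_0^2)^{1/4}$), gives $\int f_0\log(f_0/g)\lesssim\epsilon_I^2\log(1/\epsilon_I)\asymp I^{-2\alpha/\tilde p}(\log I)^{\tilde p+1}$ and $\int f_0(\log(f_0/g))^2\lesssim I^{-2\alpha/\tilde p}(\log I)^{\tilde p+2}$, while the entropy bound of Lemma~\ref{lemma:dballupperbound}, and hence Corollary~\ref{cor:largedeviation} with the scale $\delta_{n,I}^2=(I\log I)/(n/\log n)$, are untouched. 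Splitting $\mathbb P_{f_0}(\rho(\hat{f}_{n,I},f_0)\ge D\delta_{n,I})$ into a large-deviation part $P_1$ (controlled by Corollary~\ref{cor:largedeviation} with $D\delta_{n,I}$ in place of $\delta_{n,I}$) and a bias part $P_2$ (controlled by Chebyshev's inequality from the two moment bounds), and choosing $I$ so that the bias $I^{-2\alpha/\tilde p}(\log I)^{\tilde p+1}$ balances $\delta_{n,I}^2$, leads to $I\asymp n^{\tilde p/(2\alpha+\tilde p)}(\log n)^{\tilde p(\tilde p-1)/(2\alpha+\tilde p)}$ and to the final rate $\delta_{n,I}\asymp n^{-\alpha/(2\alpha+\tilde p)}(\log n)^{1+\tilde p(\tilde p-1)/(2(2\alpha+\tilde p))}$, as asserted; one checks $P_1\to0$ and $P_2\lesssim 1/(I\log n)\to0$. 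The conceptual reduction from dimension $p$ to the effective dimension $\tilde p$ via the vanishing irrelevant Haar coefficients is short; the part I expect to require the most care is this last paragraph, namely carrying the polylogarithmic factors accurately through the Kullback--Leibler bound and the bias/variance trade-off so that the exponent of $\log n$ comes out exactly as $1+\tilde p(\tilde p-1)/(2(2\alpha+\tilde p))$, and in particular verifying that $\log I\asymp\log n$ at the chosen $I$ so that powers of $\log I$ may be replaced by powers of $\log n$ throughout. A cruder route---applying Theorem~\ref{th:convergencerate} with $r=\alpha/\tilde p-\eta$ to absorb the logarithm---loses a little in the power of $n$ and does not reproduce the stated rate.
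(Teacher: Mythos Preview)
Your approach is essentially the same as the paper's, though considerably more explicit. The paper's own proof is two sentences: it observes that since $f_0$ lies in a $\tilde p$-dimensional subspace the approximation rate of Lemma~\ref{th:approximationrate} holds with $\tilde p$ in place of $p$, and then invokes Theorem~\ref{th:convergencerate}. Your reduction via vanishing of the tensor Haar coefficients in the irrelevant coordinates is exactly the content of the first sentence, written out carefully.

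Where you go further than the paper is in the last paragraph. You correctly notice that the approximation bound $\rho(f_0,f_I)\lesssim I^{-\alpha/\tilde p}(\log I)^{\tilde p/2}$ carries an extra polylogarithmic factor, so Theorem~\ref{th:convergencerate} cannot literally be quoted with $r=\alpha/\tilde p$; indeed, plugging $r=\alpha/\tilde p$ into the rate of Theorem~\ref{th:convergencerate} gives a $\log n$ exponent of $\tfrac12+\alpha/(2\alpha+\tilde p)$, not the stated $1+\tilde p(\tilde p-1)/(2(2\alpha+\tilde p))$. Your remedy---rerunning the proof of Theorem~\ref{th:convergencerate} with $\epsilon_I$ in place of $AI^{-r}$ and tracking the logs through Lemma~\ref{lemma:KLbound} and the bias--variance balance---is the right way to recover the exact exponent of $\log n$ in the statement, and your computation of $I$ and $\delta_{n,I}$ is correct. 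The paper glosses over this point; your treatment is the more honest one.
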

\begin{proof}
  Because $f_0$ lies in a $\tilde{p}$-dimensional subspace, the decay
  rate of the approximation error only depends on $\tilde{p}$ instead of $p$. Then the
  result follows Theorem \ref{th:convergencerate} and Theorem
  \ref{th:approximationrate}.
\end{proof}

From this theorem, we learn that only the effective dimension affects the rate of our method.
This implies that in extreme cases of $\tilde{p} \ll p$, our method can still achieve stable
performances. The advantage of our method is demonstrated by the following facts: in
the partition learning stage, it can quickly
restrict our attention to those relevant variables. Ideally, it can estimate the
density as a function of the effective variables alone, although they are not specified in advance.

\appendix

\section*{Acknowledgements}
The authors would like to thank Emmanuel Cand\`es, Matan Gavish and Xiaotong Shen for helpful discussions.

\bibliographystyle{imsart-nameyear}
\bibliography{ref}

\end{document}